\documentclass[12pt]{article}

\usepackage[T1]{fontenc}

\usepackage[centering, margin={0.8in, 0.5in}, includeheadfoot]{geometry}

\usepackage{amsmath,amssymb,amsthm,amsfonts,mathtools,latexsym,amscd}
\usepackage{enumitem}
\usepackage{epsfig,epstopdf,graphicx,graphics}
\usepackage[hidelinks]{hyperref}
\usepackage[mathscr]{eucal}

\newtheorem{theorem}{Theorem}[section]

\newtheorem{assumption}{Assumption}[section]

\usepackage[nameinlink]{cleveref}
	
\crefname{assumption}{{Assumption}}{Assumptions}
\crefname{lemma}{{Lemma}}{{Lemmas}}
\crefname{theorem}{{Theorem}}{{Theorems}}
\crefname{remark}{{Remark}}{{Remarks}}
\crefname{figure}{{Figure}}{{Figures}}
\crefname{tabl}{{Table}}{{Tables}}
\crefname{cor}{{Corollary}}{{Corollaries}}
\crefname{algorithm}{{Algorithm}}{{Algorithms}}

\theoremstyle{remark}

\newtheorem{remark}{Remark}[section]

\newcommand\scaleQuiverPlots{.4}
\newcommand\scaleParticlePlots{.42}
\newcommand\scaleQuiverPlotsSmall{.3}

\usepackage{float} 
\usepackage{color, xcolor}
\usepackage{cases}
\usepackage{bbm}

\usepackage{algorithmic}
\usepackage[section]{algorithm}

\usepackage{subcaption}
\usepackage{tikz}

\usepackage{verbatim}


\def\cL {{\cal L}}

\def\scrQ{{\mathscr{Q}}}

\newcommand\dive{\mathop{\rm div}}

\graphicspath{{./pictures/}}

\allowdisplaybreaks

\numberwithin{equation}{section}


\input{shortcutsPhD.sty} 

\hyphenation{one-di-men-sio-nal}

\makeatletter
\renewenvironment{algorithm}
{%
	\bigskip
		\begin{center}
			\refstepcounter{algorithm}
			\hrule height.8pt depth0pt \kern2pt
			\renewcommand{\caption}[2][\relax]{
				{\raggedright\textbf{\fname@algorithm~\thealgorithm} ##2\par}%
				\ifx\relax##1\relax 
				\addcontentsline{loa}{algorithm}{\protect\numberline{\thealgorithm}##2}%
				\else 
				\addcontentsline{loa}{algorithm}{\protect\numberline{\thealgorithm}##1}%
				\fi
				\kern2pt\hrule\kern2pt
			}
	}
	{%
		\kern2pt\hrule\relax
		\bigskip
	\end{center}
}
\makeatother

\begin{document}
\title{\LARGE \bf On the stabilization of a kinetic model by 
feedback-like control fields in a Monte Carlo framework }

\author{
Jan Bartsch{\footnote{\textsc{J. Bartsch}:
\texttt{jan.bartsch@uni-konstanz.de}; Fachbereich f\"ur Mathematik und Statistik,
Universit\"at Konstanz,
Universitätsstraße 10,
78464 Konstanz,
Germany.}}\and
Alfio Borz\`i{\footnote{\textsc{A. Borz\`i}:
\texttt{alfio.borzi@mathematik.uni-wuerzburg.de}; Institut f\"ur Mathematik,
Universit\"at W\"urzburg,
Emil-Fischer-Strasse 30,
97074 W\"urzburg,
Germany.}}
}
\date{\small\today}

\maketitle

\begin{abstract}
	The construction of feedback-like control fields for a kinetic model in phase space
	is investigated. The purpose of these controls is to drive an initial density of
	particles in the phase space to reach a desired cyclic
	trajectory and follow it in a stable way. For this purpose,
	an ensemble optimal control problem governed by the kinetic model
	is formulated in a way that is amenable to a Monte Carlo approach.
	The proposed formulation allows to define a one-shot solution procedure
	consisting in a backward solve of an augmented adjoint kinetic model.
	Results of numerical
	experiments demonstrate the effectiveness of the proposed control strategy.
\end{abstract}

\paragraph*{Keywords:}{
\small Kinetic models in phase space, Keilson-Storer collision term, ensemble optimal control problems, feedback control, Monte Carlo methods.
}

\paragraph*{2010 Mathematics Subject Classification:}{\small }
49M05, 
49M41, 
65C05, 
65K10  

\section{Introduction}

This work is devoted to the formulation and investigation
of a novel optimal control
problem governed by a linear kinetic model with linear collision term and
subject to specular reflection boundary conditions.
Our kinetic model consists of a Liouville-type non-homogeneous
streaming operator and a linear collision term $C[f]$ as follows:
\begin{equation}
	\partial_t f(x,v,t) + \nabla_x \cdot \big( v \,  f(x,v,t) \big)
	+ \nabla_v \cdot \big(F(x,v,t) \, f(x,v,t)	\big)=
	C[f](x,v,t) .
	\label{eq:linear_transport_general}
\end{equation}

We consider an initial- and boundary-value problem
with this model in the phase space $\Omega \times \RR^d$, where
$x \in \Omega \subset \RR^d$ represents the position space coordinate
and $v \in \RR^d$ represents the velocity. We assume that $\Omega$ is bounded and convex with piecewise smooth boundary $\partial \Omega$;
we denote with $n(x)$ the unit outward normal vector to
$\partial \Omega$ at $x \in \partial \Omega$. On the inflow part of this
boundary, we require (partial) specular reflection boundary conditions given by
\begin{equation}
	f(x,v,t) = \alpha \, f(x,v-2\, n(x) \, (n(x)\cdot v),t) ,
	\label{eq:forward_bc}
\end{equation}
where the parameter $\alpha \in [0,1]$ resembles the probability that a particle
colliding with the space boundary is reflected or absorbed into the boundary.
We have an inflow boundary at $(x,v) \in \partial \Omega \times \RR^d$
if it holds
$$
v \in \RR^d_< := \{v \in \RR^d \, | \, v\cdot n(x)<0\} .
$$
Our evolution problem is considered in the time interval $[0,T]$, $T>0$.

We remark that, starting with the pioneering work \cite{BealsProtopopescu1987AbstractTransport},
problem \eqref{eq:linear_transport_general} has been subject of theoretical investigation in, e.g.,
\cite{ChenYang1991LinearTransportSpecularReflection, LatrachLods2009TransportBounceBack} in the homogeneous case where $F=0$, and in \cite{VanderMee1991NonDivFreeForceKinEQ} in the more general non-homogeneous case with $F$ non divergence free.

Our purpose is to design a control field capable of driving an initial density of
particles randomly distributed in the phase space, with
corresponding density denoted with $f_0(x,v)$, to reach a desired cyclic
trajectory on the phase space and follow it in a stable way. The desired
trajectory is defined as the solution to the following differential problem
\begin{equation}
	X'(t)=V(t), \qquad V'(t)=F_0(X(t),V(t)),
	\label{eDiySys}
\end{equation}
where the dynamics $F_0$ with the initial condition $X(0)=X_0$ and $V(0)=V_0$,
$(X_0,V_0) \in \Omega \times \RR^d$ are chosen such that the resulting
periodic trajectory satisfies $(X(t),V(t)) \in \Omega \times \RR^d$, $t \in [0,T]$.
In the experiments, we consider the case where the given force field
$F_0$ corresponds
to Hooke's law. Therefore assuming that all particles are initially concentrated
on $(X_0,V_0)$, in the absence of collision they will follow an elliptic
trajectory on the phase space including this point.

In general, if the choice of $F_0$ does not result in a stable limit cycle
dynamics, starting with a distributed $f_0$ will result in particles
following different trajectories. In this case, an additional control field
must be designed to drive and maintain the particles, subject to collisions,
close to the desired trajectory. For this purpose, we augment $F_0$
with a control field as follows
\begin{align*}
	F(x,v,t;u) = F_0(x,v) + u(x,v,t).
\end{align*}
Notice that the control $u$ resembles, in the field of stochastic
control theory, a so-called Markov control function, in the sense that
a particle being at $(X(t),V(t))$ will be subject to the force $u(X(t),V(t),t)$
that instantaneously acts on the particle to perform the given task.

In our work, the task of driving the particles moving subject to
the force $F_0$, to a beam along the desired trajectory can be
considered as an idealization of problems concerning the control of plasma
currents. In this framework, a time-dependent control $u(x,v,t)$ is
required in the transient ignition phase, whereas for steady-state operation
a stationary feedback law $u(x,v)$ would be required.

We remark that, in both cases, these control fields
can be determined as solutions to kinetic optimal control problems with
ensemble cost functionals. In the realm of optimal control theory,
these functionals have been proposed
and studied in \cite{Brockett1997,Brockett2007,Brockett2012}
and later in \cite{Bartsch2019Theoretical,Bartsch2020OCPKS}. However, they have been well-known in stochastics
and statistical mechanics for a long time as expected value functionals.
In our case, we consider ensemble cost functionals with the following
structure
\begin{align}
	J(f,u) = \int_0^T \int_{\Omega \times \RR^d} \ell (x,t,v,u) \, f(x,v,t)\,dx\, dv\,dt
	+ \int_{\Omega \times \RR^d} \varphi(x,v)f(x,v,T)\,dx\,dv ,
	\label{eq:objective0}
\end{align}
where $\ell$ encodes the purpose of the control and its cost, and
$\varphi$ defines the objective of the control at final time.

We investigate the optimal control problem of minimizing \eqref{eq:objective0}
subject to the differential constraint given
by \eqref{eq:linear_transport_general}, and focus on the corresponding
optimality system. In particular, we analyze the optimality condition
equation for the reduced gradient and derive a condition
for optimality.
In this way, we arrive at the formulation of an augmented
nonlinear adjoint model for computing the optimal control field that consists in a one-shot procedure of solving the adjoint model once backwards in time, i.e. without the need of an iterative optimization scheme.
This backward solution is computed using a novel Monte Carlo approach.

We would like to point out that the investigation of open-loop optimal control problems governed by kinetic models with collision is an emerging topic with only few contributions \cite{AlbiChoiFornasierKalise2017MeanFieldControlHierarchy,Bartsch2021MOCOKI,Bartsch2020OCPKS,CaflischSilantyevYang2021AdjointDSMC,LiWangYang2023MCGradient,YangSilantyevCaflisch2023AdjointDSMC_GeneralCollisionModel}.

This is particularly true for works concerning the solution of these problems in the framework of Monte Carlo methods.
However, our approach  is the first that allows to compute feedback-like controls.

This paper is organized as follows. In the next section, we specify our governing
model and the structure of the ensemble cost functional, and
formulate our optimal control problem.
In Section \ref{sec:FirstOrderOptimalityConditions}, we use the Lagrange
framework to derive the first-order necessary optimality conditions including
an adjoint kinetic model and an optimality-condition equation. Along this
process, we recognize that a sufficient condition for satisfying the
optimality equation leads to a direct relation between the velocity
gradient of the adjoint variable and the control sought. This is the key
step to determine our feedback-like control fields as the solution
of a nonlinear augmented adjoint model. Furthermore, we propose a
way to obtain a stationary feedback law.
Section \ref{sec:MC} is devoted to the
development of a Monte Carlo (MC) framework for simulating our
kinetic and adjoint models in the specific case of a Keilson-Storer collision kernel \cite{KeilsonStorer52}. For this purpose, we further develop our
MC solver \cite{Bartsch2021MOCOKI} that was proposed to solve a
semi-ensemble open-loop optimal
control problem with a space-dependent control.
However, in the present case the control depends on $x$ and $v$
and is time-dependent. The main focus
of this section is the formulation of the adjoint problem as a
kinetic model with additional source- and reaction terms such that
they can be accommodated in a MC setting. The
use of the direct relation between
the control function and the velocity gradient of the adjoint variable,
and the reformulation of the adjoint model, allow
to construct a one-shot method for determining the control sought.
Results of numerical experiments
are presented in Section \ref{sec:NumericalExamples} considering
the trajectory of a harmonic oscillator as the desired orbit.
Correspondingly, we demonstrate that the time-dependent control field
obtained with our method and the corresponding time average are
able to perform the given tasks.
A section of conclusion completes this work.

\section{A kinetic optimal control problem}
\label{sec:FormulationOfTheProblem}
In this section, we formulate our ensemble optimal control problem.
The evolution of the density is governed by \eqref{eq:linear_transport_general},
where the gain-loss collision term $C[f](x,v,t)$ is given by
\begin{equation}
	C[f](x,v,t) = \int_{\RR^d} A(w,v)\,f(x,w,t)\,dw -\sigma(x,v)\, f(x,v,t) ,
	\label{CollisionTerm}
\end{equation}
where the function $\sigma$ represents the collision frequency,
and $A$ is the collision kernel. Both are assumed to be non-negative and bounded.

In order to ease notation, we introduce the free-streaming
operator $L_u$ (in conservative form) given by
\begin{align*}
	L_u = \nabla_x \big( v \, \cdot \big) + \nabla_v \big( F(x,v,t; u) \, \cdot \big)  \, .
\end{align*}
Hence, our controlled kinetic model can be written as follows:
\begin{equation*}
	\partial_t f(x,v,t) + L_u \, f(x,v,t) =  C[f](x,v,t) .
\end{equation*}
Further, we specify an initial density $f_0(x,v) \ge 0$
for all $(x,v) \in \Omega \times \RR^d$ at time $t=0$ such that
$\lim_{|v| \rightarrow \infty} f_0(x,v) \rightarrow 0$ for all $ x \in \Omega$.

Our problem is posed on the phase-space-time cylinder
$\scrQ \coloneqq \Omega \times \RR^d \times (0,T]$
and the inflow and outflow boundaries are defined as follows:
\begin{align}
	\scrQ^- \coloneqq \partial \Omega \times \RR^d_<\times (0,T],
	\qquad\qquad
	\scrQ^+ \coloneqq \partial \Omega \times \RR^d_{>}\times (0,T].
\end{align}

Our kinetic initial- and boundary-value problem is given by
\begin{align}
	\partial_t f(x,v,t) \, + \, L_u \,  f(x,v,t) \,
	&= \, C[f](x,v,t) \,  & \text{ in }& \scrQ \notag \\
	f(x,v,0) \,& = \, f_0(x,v) & \text{ on } &\Omega \times \RR^d
	\label{eq:controlled_Init_bdry_value_problem}\\
	f(x,v,t) &= \alpha \, f(x,v-2 \, n(x) \, (n(x)\cdot v),t) & \text{ on }  &\scrQ^- \notag.
\end{align}

Well-posedness of this evolution problem can be stated subject to the
following assumptions
\begin{assumption} \label{asmpt:Existence_Assumption_VanDerMee}
	\phantom{}
	\begin{enumerate}[label=\alph*)]
		\item $F$ \emph{is a Lipschitz-continuous vector field.}
		\item\emph{ The integral} $\int_0^s \dive F(x,v,t;u)\, dt$ \emph{is essentially bounded on} $\scrQ$.
		\item \emph{The collision frequency }$\sigma$ \emph{is integrable and bounded from below by the divergence of} $F$.
	\end{enumerate}
\end{assumption}
Subject to these conditions, existence and uniqueness of solutions
is stated in the following theorem \cite[Theorem 4]{VanderMee1991NonDivFreeForceKinEQ}:
\begin{theorem}
	\label{thm:Existence_VanDerMee}
	Assume that \cref{asmpt:Existence_Assumption_VanDerMee} holds and suppose $1\leq p < \infty$ and $0 \leq \alpha < 1$.
	Furthermore, assume that  and $f_0 \in L^p(\Omega \times \RR^d)$.
	Then there exists a unique $f \in L^p(\scrQ)$ that solves \eqref{eq:controlled_Init_bdry_value_problem}.
	Furthermore, if the initial condition are nonnegative and the collision operator is a positive operator, then $f$ is nonnegative.
\end{theorem}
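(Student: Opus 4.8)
Since this statement is quoted verbatim from \cite[Theorem 4]{VanderMee1991NonDivFreeForceKinEQ}, the most economical route is simply to invoke that reference. If instead one wants to reconstruct the argument, the natural plan is to recast \eqref{eq:controlled_Init_bdry_value_problem} as an abstract Cauchy problem $\partial_t f = (-L_u + C)f$ on the Banach space $X := L^p(\Omega\times\RR^d)$ and to produce the solution as the orbit of a solution operator generated by (the closure of) $-L_u + C$ on a domain encoding the reflection condition on $\scrQ^-$. Because $F$ is time-dependent, the precise object is a two-parameter evolution family $\mathcal{U}(t,s)$ obtained from the characteristic flow over $[s,t]$, which collapses to a $C_0$-semigroup $\mathcal{T}(t)$ in the autonomous / steady-feedback case; in either case, uniqueness in $L^p(\scrQ)$ is automatic since such a family is the unique solution operator of its generator.

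The plan splits the generator into a transport-plus-loss part and a gain part. First I would treat $A_0 := -L_u - \sigma$ on the domain $D(A_0) = \{ f \in X : L_u f \in X,\ f|_{\scrQ^-} = \alpha\,\gamma f \}$, where $\gamma$ is the specular-reflection trace operator built from $v \mapsto v - 2\,n(x)(n(x)\cdot v)$. Solving this first-order system by the method of characteristics — the ODE flow $\dot X = V$, $\dot V = F(X,V,t)$, uniquely solvable by \cref{asmpt:Existence_Assumption_VanDerMee} part~a) via Picard--Lindel\"of — represents the solution operator as transport along characteristics carrying the weight $\exp(-\int\sigma\,\mathrm dt)$ and the factor $\alpha$ at each boundary hit. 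Liouville's formula shows the Jacobian of the flow evolves as $\exp(\int_0^s \dive F\,\mathrm dt)$, so the change-of-variables constant in the resulting $L^p$ estimate is controlled precisely by part~b). The dissipativity needed for generation is then furnished by part~c): pairing $A_0 f$ against the duality map $f^\ast = |f|^{p-1}\sgn f$ and integrating by parts over $\Omega\times\RR^d$, the bulk term is nonpositive because $\sigma \ge \dive F$, while the factor $0 \le \alpha < 1$ makes the boundary contribution nonpositive, yielding $\langle A_0 f, f^\ast\rangle \le \omega\,\|f\|^p$ and hence generation via Lumer--Phillips (autonomous case) or Kato's stability conditions (non-autonomous case).

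With the transport-loss part in hand, I would add the gain operator $(Gf)(x,v) := \int_{\RR^d} A(w,v)\,f(x,w)\,\mathrm dw$. Since $A$ is nonnegative and bounded, $G$ is a bounded positive operator on $X$, so the bounded-perturbation theorem shows $A_0 + G$ generates a solution family given by the Dyson--Phillips series $\mathcal{T}(t) = \sum_{k\ge 0}\mathcal{T}_k(t)$. Positivity is then read off from this series: the transport-loss operator $\mathcal{T}_0(t)$ preserves sign (transport, the exponential weight and the factor $\alpha$ are all nonnegative) and $G \ge 0$, so every iterate $\mathcal{T}_k(t)$ is positive and hence $\mathcal{T}(t) \ge 0$, delivering the final nonnegativity claim for nonnegative $f_0$.

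The step I expect to be the genuine obstacle is the rigorous treatment of the boundary condition: giving meaning to the trace $f|_{\scrQ^-}$ for $L^p$ functions whose transport derivative is only in $L^p$, proving that $\gamma$ is well-defined and bounded with the correct contraction constant, and making the integration-by-parts identity fully rigorous when the boundary measure degenerates on $\{v\cdot n = 0\}$. This is the classical difficulty of kinetic transport with reflecting boundaries, and it is exactly here that the non-divergence-free force together with the lower bound $\sigma \ge \dive F$ must be combined carefully to close the dissipativity estimate.
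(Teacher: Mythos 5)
Your proposal matches the paper's own treatment: the paper gives no proof of this theorem at all, it simply states it as a quotation of \cite[Theorem 4]{VanderMee1991NonDivFreeForceKinEQ}, which is exactly the ``most economical route'' your first sentence identifies. The semigroup/characteristics reconstruction you sketch (transport-plus-loss generator with reflecting traces, dissipativity from $\sigma \ge \dive F$, bounded gain perturbation and positivity via the Dyson--Phillips series) is a faithful outline of how the cited result is established in the literature, but it goes beyond anything the paper itself does.
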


By virtue of \cref{thm:Existence_VanDerMee}, we see that choosing a fixed initial condition $f_0  \in L^p(\Omega \times \RR^d)$, for a given control $u \in C^{0,1}(\scrQ)$, the solution to \eqref{eq:controlled_Init_bdry_value_problem} results in a well-defined control-to-state map $G: C^{0,1}(\scrQ) \rightarrow L^p(\scrQ)$, $u \mapsto f=G(u)$.
Continuity and differentiability of this map can be proved based on well-known
techniques; see, \break e.g., \cite{Bartsch2019Theoretical}.

Our ensemble cost functional is given in \eqref{eq:objective0}
with $\ell(x,t,v,u)=\theta(x,v,t) + \frac{\nu}{2}\, |u|^2$. We have
\begin{multline}
	J(f,u) = \int_0^T \int_{\Omega \times \RR^d}
	\big[ \theta (x,v,t) + \frac{\nu}{2}  |u(x,v,t)|^2  \big]\, f(x,v,t)\,dx \, dv \,dt
	\\ + \int_{\Omega \times \RR^d} \varphi(x,v) f(x,v,T)\,dx \, dv .
	\label{eq:objective}
\end{multline}

We make the following assumption.
\begin{assumption}
	\label{aspt:Convex_Mathematical_Potentials}
	\emph{We suppose that} $\theta$ \emph{and} $\varphi$ \emph{are smooth
		functions, bounded from below, and locally strictly convex in a neighbourhood of their global minimum and having monotone radial growth
		with respect to their respective minimum.}
\end{assumption}

In \eqref{eq:objective}, the term $\theta$ encodes the task of the
control to drive the particles along a desired trajectory in phase space denoted
with $z_D(t)=(x_D(t),v_D(t))$. In the realm of control of ordinary
differential models, the choice $\theta(z,t) = | z-z_D(t)|^2 $, $z=(x,v)$, would be standard.
Similarly, the final observation term could be
defined as $\varphi(z) = |z - z_T|^2$, $z_T=(x_T,v_T)$. Notice
that, with these terms, the functional evaluates mean-square errors
of tracking and final observation. However, with these functions,
we cannot guarantee integrability of the terms
$\theta \, f$ and $\varphi\, f(T)$ in the objective functional. On the other hand,
the following choice is appropriate:
\begin{align}
	\theta(z,t) = - \frac{C_\theta}{\sqrt{(2\pi)^{2d}\det(\Sigma_\theta)}} \exp\left(-\frac{1}{2}(z-z_D(t))^T\Sigma^{-1}_\theta(z-z_D(t)) \right),
	\label{eq:Structure_theta}
\end{align}
where $C_\theta>0$ represents a weight of the tracking part
of the cost functional, and $\Sigma_\theta \in \RR^{2d \times 2d}$ has the significance of a co-variance matrix that we assume to be a diagonal matrix.

Similarly, we choose
\begin{align*}
	\varphi(z) = -\frac{C_\varphi}{\sqrt{(2\pi)^{2d}\det(\Sigma_\varphi)}}\exp \left( -\frac{1}{2}(z-z_T)^T\Sigma^{-1}_\varphi(z-z_T) \right), \quad C_\varphi > 0.
\end{align*}
Notice that the choice of $\theta$ and $\varphi$ given above
satisfy the requirements of \cref{aspt:Convex_Mathematical_Potentials}.

Now, we can formulate our ensemble optimal control problem
\begin{equation}
	\begin{split}
		&\min_{{ (u,f)}}  \int_0^T \int_{\Omega \times \RR^d} { \left( \theta (x,v,t) + \frac{\nu}{2}|u(x,v,t)|^2 \right) } \, f(x,v,t)\,dx\,dv\,dt
		+ \int_{\Omega \times \RR^d} \varphi(x,v)f(x,v,T)\,dx\, dv
		\\[0.2cm]
		&\text{s.t.}
		\begin{cases}
			\partial_t f(x,v,t) + L_u \, f(x,v,t) = C[f](x,v,t), \quad & \text{ in } \scrQ \\[0.2cm]
			f(x,v,0) = f_0(x,v) \quad & \text{ on } \Omega \times \RR^d \\[0.2cm]
			f(x,v,t) = \alpha \, f(x,v-2 \, n(x) \, (n(x)\cdot v),t) & \text{ on }  \scrQ^- \; .
		\end{cases}
	\end{split}
	\label{eq:OCP}
\end{equation}

In this problem, the control field is sought in the space of measurable
functions such that integrability of $|u|^2 f$ in $\scrQ$ is guaranteed for any $f\in L^p(\scrQ)$, $1 \leq p < \infty$.
We denote this space with $U$.
Notice that if $f$ is positive, then the integral $\int_{\scrQ} |u|^2f \,dx\,dv\,dt$ gives the square of the norm of $u$ in a weighted $L^2$ space.
By means of the control-to-state map, the optimization
problem \eqref{eq:OCP} can be reformulated as follows:
\begin{equation}
	\min_{u \in U} J_r(u) ,
	\label{minJred}
\end{equation}
where $J_r(u) := J(G(u),u)$ defines the so-called reduced cost functional.
Existence of a minimizer to \eqref{minJred} (i.e. an optimal control
for \eqref{eq:OCP}) can be proved as in \cite{Bartsch2019Theoretical},
subject to some restrictive assumption.
Specifically, we have the following theorem
\begin{theorem}
	Let the assumptions of \cref{thm:Existence_VanDerMee} and  \cref{aspt:Convex_Mathematical_Potentials} be fulfilled.
	Additionally, suppose that
	$
	u \in U \cap  L^\infty(\scrQ) .
	$
	Then the optimal control problem \eqref{minJred} admits at least one solution $u^* \in U\cap L^\infty(\scrQ)$.
\end{theorem}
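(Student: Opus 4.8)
The plan is to establish existence by the direct method of the calculus of variations. By \cref{aspt:Convex_Mathematical_Potentials} the functions $\theta$ and $\varphi$ are bounded (the Gaussian choices in \eqref{eq:Structure_theta} are in particular integrable), and by \cref{thm:Existence_VanDerMee} each state $f = G(u)$ is nonnegative and satisfies an a priori bound on $\|f\|_{L^p(\scrQ)}$ for controls bounded in $L^\infty(\scrQ)$. H\"older's inequality then shows that the tracking term $\int_{\scrQ}\theta f$ and the terminal term $\int_{\Omega\times\RR^d}\varphi\, f(\cdot,\cdot,T)$ are bounded from below, while $\frac{\nu}{2}\int_{\scrQ}|u|^2 f \ge 0$. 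Hence $J_r$ is bounded from below on $U\cap L^\infty(\scrQ)$, and I fix a minimizing sequence $(u_n)_n \subset U\cap L^\infty(\scrQ)$ with $J_r(u_n)\to j:=\inf_u J_r(u) > -\infty$ and corresponding states $f_n=G(u_n)$.

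The second step is to extract a convergent subsequence. Here I would invoke the restrictive assumption alluded to before the theorem, namely that the minimizing sequence stays in a set that is bounded in the Lipschitz norm of $C^{0,1}(\scrQ)$ (equivalently, one restricts \eqref{minJred} to an admissible set of uniformly Lipschitz, uniformly bounded controls). Under such a uniform bound, the Arzel\`a--Ascoli theorem yields a subsequence (not relabeled) converging uniformly to a limit $u^* \in C^{0,1}(\scrQ) \subset U\cap L^\infty(\scrQ)$, with the bounds on the sup-norm and Lipschitz constant inherited by $u^*$.

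Third, I would transfer this convergence to the states and pass to the limit in the cost. Because the control-to-state map $G : C^{0,1}(\scrQ) \to L^p(\scrQ)$ is continuous (as asserted after \cref{thm:Existence_VanDerMee} and provable by the techniques of \cite{Bartsch2019Theoretical}), the uniform convergence $u_n \to u^*$ gives $f_n \to f^* := G(u^*)$ strongly in $L^p(\scrQ)$, where $f^*$ solves \eqref{eq:controlled_Init_bdry_value_problem} with control $u^*$ and remains nonnegative. The tracking and terminal terms then converge to their counterparts with $f^*$ by strong convergence of $f_n$ together with the boundedness of $\theta$ and $\varphi$; for the product term, the uniform convergence $|u_n|^2\to|u^*|^2$, the strong $L^1$-convergence of $f_n$, and the uniform mass bound yield $\int_{\scrQ}|u_n|^2 f_n\to\int_{\scrQ}|u^*|^2 f^*$ by dominated convergence. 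Consequently $J_r(u^*)=\lim_n J_r(u_n)=j$, so $u^*$ realizes the infimum and is the desired minimizer in $U\cap L^\infty(\scrQ)$.

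The main obstacle is the compactness step. The nonlinearity of $G$ prevents us from working with mere weak-$\ast$ compactness of bounded sequences in $L^\infty(\scrQ)$, under which $f_n=G(u_n)\to f^*$ need not hold; this is precisely why the restrictive equicontinuity (Lipschitz) bound is imposed, upgrading weak-$\ast$ compactness to the uniform convergence needed to pass both the state map and the bilinear cost term $|u|^2 f$ to the limit. The remaining technical points are verifying the continuity of $G$ in the $C^{0,1}$-to-$L^p$ topology and confirming that the mass $\int f_n$ stays uniformly bounded along the sequence, both of which rely on the dissipativity afforded by $\alpha<1$ in \eqref{eq:forward_bc} and on the estimates of \cite{Bartsch2019Theoretical}.
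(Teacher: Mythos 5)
Your proof is correct in outline, but it is not the paper's proof: the paper disposes of this theorem in one line, by checking that Assumptions (A.1)--(A.4) of \cite{Bartsch2019Theoretical} hold in the present setting and then invoking \cite[Theorem 4.1]{Bartsch2019Theoretical}. What you have written is, in effect, a reconstruction of the direct-method argument that lies behind that cited theorem: boundedness from below of $J_r$, a minimizing sequence confined to a uniformly bounded and uniformly Lipschitz admissible class, Arzel\`a--Ascoli compactness, continuity of the control-to-state map $G$, and passage to the limit in the three terms of the cost. Your identification of the ``restrictive assumption'' mentioned before the theorem with a uniform $C^{0,1}$-bound on the admissible controls is the right reading, since that is what upgrades mere weak-$*$ compactness in $L^\infty(\scrQ)$ to the convergence needed to handle the nonlinear map $G$ and the bilinear term $|u|^2 f$. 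The trade-off between the two routes: the paper's citation is short and rigorous modulo the reference, but hides where each hypothesis is used; your argument is self-contained and makes those roles visible, at the price of two loose ends you should tighten. First, $\scrQ=\Omega\times\RR^d\times(0,T]$ is not compact (the velocity variable ranges over $\RR^d$), so Arzel\`a--Ascoli yields only locally uniform convergence of $(u_n)$; to pass to the limit in $\int\theta f_n$, $\int\varphi f_n(\cdot,\cdot,T)$ and $\int|u_n|^2 f_n$ you additionally need tightness of the densities $f_n$ (uniform control of mass at large $|v|$), which is where the Gaussian decay of $\theta$ and $\varphi$ and the a priori estimates for the kinetic equation must enter. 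Second, the continuity of $G$ from $C^{0,1}(\scrQ)$ to $L^p(\scrQ)$ is only asserted in the paper, with proof deferred to \cite{Bartsch2019Theoretical}; so your argument still inherits the same dependence on that reference that the paper's one-line proof has---you have not eliminated the citation, only relocated it to the interior of the argument.
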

\begin{proof}
	The Assumptions (A.1)-(A.4) in \cite{Bartsch2019Theoretical} are fulfilled and we can apply \cite[Theorem 4.1]{Bartsch2019Theoretical} in order to conclude the existence of a minimizer.
\end{proof}

\section{Optimality system}
\label{sec:FirstOrderOptimalityConditions}
In this section, we discuss the optimality system characterizing
a solution to \eqref{eq:OCP}.
For this purpose,
Fr\'echet differentiability of $G$ and $J$ are required, which we assume in this work; however, see, e.g.,
\cite{Bartsch2019Theoretical} for related results.

A convenient way to derive the optimality system is to introduce the Lagrange function corresponding to \eqref{eq:OCP} as follows:
\begin{align*}
	&\cL(f,u,q,q_0,q_\sigma) \\[0.2cm]& = J(f,u) + \int_0^T \int_{\Omega \times \RR^d}
	\Big( \partial_t f (z,t) +L_u f(z,t) - C[f](z,t) \Big)\, q(z,t)\,dz\,dt \\[0.2cm]
	&\quad + \int_{\Omega \times \RR^d} \big( f(z,0) - f_0(z) \big)\, q_0(z)\,dz\\[0.2cm]
	& \quad+ \int_0^T \int_{\partial \Omega \times \RR^d_<}
	(f(x,v,t)-\alpha \, f(x,v-2 \, n \, (n\cdot v),t)) \, q_s(x,v,t)\,d\sigma\,dv\,dt ,
\end{align*}
where $z=(x,v)$, $q$, $q_0$ and $q_s$ represent Lagrange multipliers and $d\sigma$ is a surface element. In this framework, the optimality system is obtained by requiring that the Fr\'echet derivatives of
$\cL$ with respect to its arguments are zero.

The resulting optimality system
consists of three parts: 1) the kinetic model;
2) the adjoint kinetic problem; 3) the optimality condition
that corresponds to $\nabla_u J_r(u)=0$, where this gradient is expressed in terms
of the solutions to the problems 1) and 2).

The adjoint kinetic model is given by
\begin{align}
	-\partial_t q(x,v,t)+ L^*_u \, q(x,v,t) &= \SPI{d} A(v,w) \, q(x,w,t)\,dw& \notag \\[0.2cm]
	& \quad- q(x,v,t) \, \sigma(x,v) - \theta(x,v,t) \nonumber\\
	&\quad- \frac{\nu}{2}|u(x,v,t)|^2 & \text{ in } \scrQ,    \label{eq:AdjointEquation} \\[0.2cm]
	q(x,v,T) &= -\varphi (x,v)&\qquad\text{ in } \Omega\times \RR^d, \notag \\[0.2cm]
	&\hspace{-1.2cm} {q(x,v,t) = \alpha \, q(x,v-2 \, n(x) \, (n(x)\cdot v),t)} & \text{ in } \scrQ^+ \; .
	\notag
\end{align}

Notice that in this problem a terminal condition is specified and therefore
\eqref{eq:AdjointEquation} governs the evolution of $q$ backwards in time.
The adjoint free-streaming operator $L^*_u$ is given by
\begin{align*}
	L^*_u \coloneqq  -v\cdot\nabla_x - {F(x,v,t;u)}\cdot\nabla_v .
\end{align*}

The optimality system is completed with the specification of
the optimality condition equation.
Based on the Lagrange function given above, we obtain
\begin{equation}
	\nabla_u J_r(u):=
	f(x,v,t) \Big(\nu \, u(x,v,t) - \partial_u F(x,v,t;u)\nabla_v q(x,v,t)\Big) =0 .
	\label{eq:ReducedGradient}
\end{equation}

However, recall our aim to construct a control field on the entire
phase space. Obviously, such a control would be required if the density
$f$ is everywhere positive, in which case a necessary and sufficient
condition for \eqref{eq:ReducedGradient} to be to satisfied is to set
\begin{equation}
	u(x,v,t) = \frac{1}{\nu} \nabla_v q(x,v,t) ,
	\label{eFB}
\end{equation}
since in our setting $\partial_u F(x,v,t;u)$ is an identity matrix.
This is an essential step in our development
because with $u$ given by \eqref{eFB} and replaced in the
adjoint kinetic model \eqref{eq:AdjointEquation}, we obtain an equation
for the adjoint variable that does not depend on the density of the
particles nor on its initial condition. Therefore also $u$ given by \eqref{eFB}
does not depend on $f$ but solely on the optimization functions
$\theta$ and $\varphi$ that define the control tasks. These are the
characterizing features of a feedback control.

Therefore, our focus is the solution of the following nonlinear
augmented adjoint problem
\begin{equation}
	\begin{split}
		-\partial_t q(x,v,t) + L^*_u \, q(x,v,t) = \SPI{d} A(v,w) \, q(x,w,t)\,dw \qquad\qquad\qquad \quad \\[0.2cm]
		- q(x,v,t) \, \sigma(x,v) - \theta(x,v,t) - \frac{1}{2 \nu}| \nabla_v q(x,v,t)|^2 \qquad &\text{ in } \scrQ,
		\\[0.2cm]
		q(x,v,T) = -\varphi (x,v) \qquad &\text{ in } \Omega\times \RR^d, \\[0.2cm]
		{q(x,v,t) = \alpha \, q(x,v-2 \, n(x) \, (n(x)\cdot v),t)}\qquad  &\text{ in } \scrQ^+ \; .
		\label{eq:AdjointAugmented}
	\end{split}
\end{equation}

Notice that this problem has some similarity with the Hamilton-Jacobi-Bellman
equation arising in the dynamic programming approach \cite{Bellman1957} to compute closed-loop controls for stochastic drift-diffusion models; see, e.g.,  \cite{Fabbri2017,HorowitzDamleBurdick2014LinearHJB} for recent references on this topic.

We remark that $\theta$ defined in \eqref{eq:Structure_theta} has the purpose
to drive a density such that it remains approximately centred at $z_D(t)$
at time $t$. However, our purpose is to drive the ensemble of particles
to reach and maintain a given periodic orbit, which admits different
time parametrizations. Therefore in this case it is more appropriate
to consider in the objective functional a time-independent $\theta$ corresponding to the entire
trajectory. For this purpose, we consider a time-averaged function as follows
\begin{equation}
	\bar{\theta}(x,v)=\frac{1}{T} \, \int_0^T \theta(x,v,t) \, dt .
	\label{eThetaStat}
\end{equation}
Using \eqref{eq:Structure_theta}, we see that $\bar{\theta}$
represents a closed valley with the bottom line corresponding to
the desired orbit.

In the case that a stationary feedback law is required, we propose
to construct it through the time average of \eqref{eFB} as follows
\begin{equation}
	\bar{u}(x,v)=\frac{1}{T} \, \int_0^T u(x,v,t) \, dt .
	\label{eControlStat}
\end{equation}
This approach is motivated by works on
optimal control problems of periodic processes in the field of engineering of chemical plants with cyclic regimes; see, e.g., \cite{SpeyerEvans1984}.

Indeed, we face the problem of solving the completely new differential
problems \eqref{eq:AdjointAugmented}, which is a
challenge with deterministic numerical methods and even more
challenging with Monte Carlo methods
as we intend to do. In fact, in the latter case,
not only we have to accommodate terms $\theta$ and $|\nabla_v q|^2/2$,
which are unusual in any Monte Carlo approach, but also have to resolve the
fact that, in the new model, we do not have a gain-loss structure and
$q$ does not represent a material density. We address these issues
in the next section by focusing on a specific collision model.

\section{A Monte Carlo approach}
\label{sec:MC}

This section is devoted to the formulation of Monte Carlo algorithms
to solve our governing model and the corresponding adjoint problem.
For this purpose, we focus on the collision mechanism proposed
by J. Keilson and J.E. Storer in \cite{KeilsonStorer52} for modelling
Brownian motion. We remark that in subsequent works,
the Keilson-Storer (KS) collision term has been successfully applied
to the estimation of transport
coefficients \cite{Berman1986}, laser spectroscopy \cite{Berman2010}, and molecular dynamics simulations \cite{Tran2009}, reorientation of molecules in liquid water \cite{Gelin2006}, and quantum transport \cite{Kosov2018}. Further, notice that the KS term allows to mimic strong and weak collision limits
\cite{Strekalov2012}. A microscopic derivation of the KS collision
term is presented in \cite{Gelin2015}. Notice that an open-loop optimal
control problem governed by our kinetic model with KS collision
and $F(x,v,t;u)=u(x)$ and
a quadratic $H^1$ control cost (not the expected value) has been
investigated by the authors in \cite{Bartsch2021MOCOKI}.

The KS collision kernel is given by
\begin{align*}
	A(v,w)= \Gamma \sqrt{\frac{\beta}{\pi}} \exp\left( -\beta\, | w-\gamma v| ^2 \right),
\end{align*}
where $\gamma \lessapprox 1$ is a damping parameter. Further, we have
$\beta = \frac{M}{2 k_B T_p \,  (1 - \gamma^2) }$, where $M$ is the mass
of each particle, $T_p$ is their temperature,
$k_B$ is the Boltzmann constant, and $\Gamma$ represents a constant
relaxation rate $1/\tau$ (collision frequency). In \eqref{CollisionTerm},
we have $\sigma=\Gamma$.

The Monte Carlo method is a mesh-less scheme in which the particles are
represented by labelled pointers to structures that contain all
information as velocity, time of collision, etc..
A timestep in a MC procedure consists of
changing the content of this structure, and
adding or subtracting pointers, that is, particles, if required. The
content of the structure is changed subject to two processes.
On the one hand, according to the streaming phenomenon, represented by the operator $L_u$, where position and velocity of each particle
are changed according to the underlying dynamical system.
On the other hand, the velocities are changed whenever a
collision occurs.

In this evolution process, the timestep size $\Delta t$ is chosen some order of magnitude bigger than the mean time between two collisions. This choice
allows to retain the statistical significance of the occurrence of collisions.
However, the timestep size cannot be too large since in this case
transient phenomena would be filtered out.

In order to determine when a particle undergoes a velocity transition due to collision,
one can follow the procedure described in, e.g., \cite{Jacoboni1983}. If $\tau^{-1}$ is the collision frequency, then $\tau^{-1} dt$ is the probability that a particle has a collision during the time $dt$. Now, assuming that a particle has a collision at time $t$, the probability that it will be subject to another collision at time $t + \delta t$
is computed according to a Poisson distribution given by
\begin{equation*}
	\exp\left( -\int_t^{t+\delta t} \! \tau^{-1} \, dt' \right) = \exp(- \delta t/\tau).
\end{equation*}
Thus, following a standard approach, and using a uniformly distributed random number $r$ between 0 and 1, one obtains the following
\begin{equation}
	\delta t=-\tau\log(r).
	\label{eq:delta_t}
\end{equation}
This is the free streaming time within which
the microscopic particles' dynamics is integrated. For this purpose, we apply the symplectic (velocity) Verlet algorithm; see, e.g.,  \cite{Hairer03,Verlet67} and \cite{Skeel97}.

While updating the position, we have to take into account the
boundary of the physical domain with the given (partial) specular reflection boundary conditions. If $\alpha<1$, we have partial absorption
that is implemented by pruning particles from the list with a certain probability corresponding to the parameter $\alpha$.

Assuming that a collision occurs, the new velocities of the
physical particles after the collision are computed based on the collision kernel, which in the KS case can be written as a normal distribution as follows
\begin{equation}
	A(v,w) =\Gamma\mathcal{N}_1\left( \gamma v,\frac{1}{2\beta} \right),
	\label{A_norm}
\end{equation}
where $\mathcal{N}_1$ denotes the univariate normal
Gaussian distribution. Since the covariance matrix is diagonal, it is possible to generate component-wise the new velocity according to the corresponding one-dimensional distributions. For this purpose, the Box-Muller formula \cite{Box1958}.

In the next two sections, we illustrate our Monte Carlo approach to
solve the kinetic model and its corresponding adjoint. We choose a number of particles $N_{f}$, and consider a partition of the time interval $[0,T]$ in $N_t$
subintervals of size $\Delta t = T/N_t$ such that $\Delta t \gg \delta t$.
With this setting, we have $t^k = k \Delta t$, for the time of the $k$-th
timestep, $k=0,\ldots,N_t$. We define $\Gamma_{\Delta t} = \left\lbrace  t^k = k \Delta t \in [0,T], \quad k = 0,\ldots,N_t  \right\rbrace.$

\subsection{Monte Carlo simulation of the kinetic model}

We define $F$ as the list of labelled pointers to structures that
resemble particles.
We denote with $F^k[p]$ the pointer to the $p$-th particle at the
$k$-th timestep. We have $p=1,\ldots,N_f$ and
$k = 0,\ldots,N_t$.
Further, let $F^k[p].v$ be the velocity
of the $p$-th particle at the $k$-th timestep,
and let $F^k[p].x$ be the position of the  $p$-th particle at the $k$-th timestep.
Moreover, let $F^k[p]. t'$ be the time that is elapsed for the $p$-th particle starting from $t^k$.
This quantity is used to determine if the particle will
undergo another collision in the current timestep, assuming
that $0 \leq F^k[p]. t' < \Delta t$.

To initialize $F^0$ using the distribution $f_0$, we apply \cref{algo:initialize} given below.
\begin{algorithm}
	\caption{Initial condition}
	\label{algo:initialize}
	\begin{algorithmic}[1]
		\REQUIRE $f_0(x,v)$
		\vspace{0.1cm}
		\FOR{$p=1$ \TO $N_{f}$}
		\vspace{0.1cm}
		\STATE Compute $(F^0[p].v,F^0[p].x) \sim f_0$
		\vspace{0.1cm}
		\STATE Set $F^0[p].  t' = 0$
		\vspace{0.1cm}
		\ENDFOR
	\end{algorithmic}
\end{algorithm}

The (partial) specular reflecting boundary conditions (for the case of a one-dimensional boundary $\Omega \subset \RR$) are implemented as given in \cref{algo:bdry}.

\begin{algorithm}
	\caption{Specular reflecting boundary condition}
	\label{algo:bdry}
	\begin{algorithmic}[1]
		\REQUIRE Updated position $\tilde{x}$ according  to Verlet method
		\vspace{0.1cm}
		\IF { $\tilde{x} \in \Omega = [0,L]$}
		\vspace{0.1cm}
		\RETURN $\tilde{x}$
		\vspace{0.1cm}
		\ELSE
		\vspace{0.1cm}
		\STATE Sample uniform random number $\xi \sim \mathcal{U}(0,1)$
		\vspace{0.1cm}
		\IF {$\xi > \alpha$ }
		\vspace{0.1cm}
		\STATE Prune particle from list of particles
		\vspace{0.1cm}
		\ELSE
		\vspace{0.1cm}
		\IF { $\tilde{x} < 0$}
		\vspace{0.1cm}
		\STATE Calculate $\omega = \lfloor \tilde{x}/L \rfloor\mod 2$
		\vspace{0.1cm}
		and set velocity $v = (-1)^{\omega-1}v$
		\vspace{0.1cm}
		\RETURN $x = \omega L + (-1)^\omega (-\tilde{x} \mod L)$
		\vspace{0.1cm}
		\ELSIF { $\tilde{x} > L$}
		\vspace{0.1cm}
		\STATE Calculate $\omega = \lfloor \tilde{x}/L \rfloor\mod 2$
		\vspace{0.1cm}
		and set velocity $v = (-1)^{\omega-1}v$
		\vspace{0.1cm}
		\RETURN $x = (1-\omega)L + (\omega-1)(\tilde{x} \mod L)$
		\vspace{0.1cm}				
		\ENDIF
		\vspace{0.1cm}
		\ENDIF
		\vspace{0.1cm}
		\ENDIF
	\end{algorithmic}
\end{algorithm}

As already mentioned, the MC simulation of our kinetic model \eqref{eq:linear_transport_general}
consists of two processes. On the one hand, we simulate
the streaming phenomenon where position and velocity
are changed according to the underlying dynamical system.
On the other hand, the velocity
of each particle may change due to collisions. In this case, a particle with position $x$ and velocity $v$ that is subject to collision acquires a new velocity $v'$
by keeping approximately the same position. Notice that
choosing a numerical boundary for the velocity large enough,
the probability that the velocity of a particle exceeds this boundary is very low
but possibly not zero. If this rare event happens, one generates
again a new velocity for the particle using the same pre-collision velocity as before.

Our Monte Carlo simulation algorithm can be summarized
with \cref{algo:DSMC_Boltz}.

\begin{algorithm}
	\caption{MC solver of the kinetic model}
	\label{algo:DSMC_Boltz}
	\begin{algorithmic}[1]
		\REQUIRE  $f_0(x,v)$, $u(x,v,t)$
		\vspace{0.1cm}
		\STATE Initialise $N_{f}$ particles using \cref{algo:initialize} and $f_0(x,v)$, set $\delta t_2 = 0$
		\vspace{0.1cm}
		\FOR{$k=0$ \TO $N_t-1$}
		\vspace{0.1cm}
		\FOR{$p=1$ \TO $N_{f}$}
		\vspace{0.1cm}
		\WHILE{$F^k[p].t'<\Delta t$}
		\vspace{0.1cm}
		\STATE Compute $\delta t_1$ according to \eqref{eq:delta_t}
		\vspace{0.1cm}
		\STATE Determine $F^k[p].v \sim \mathcal{N}\left(\gamma v,\frac{1}{2\beta}\right)$
		\vspace{0.1cm}
		\STATE update $F^k[p].x$ and $F^k[p].v$ according to the Verlet-Algorithm: \\
		\vspace{0.1cm}
		$F^k[p].x \, = \, F^k[p].x + F^k[p].v\,\delta t_1 \, + \, u(F^k[p].x) \frac{\delta t_1+\delta t_2}{2}\delta t_1$, \\
		\vspace{0.1cm}
		$F^k[p].v \, = \,  F^k[p].v + u( F^k[p].x)\,\delta t_1$ \\
		\vspace{0.1cm}
		and taking the boundary condition into account using \cref{algo:bdry}
		\vspace{0.1cm}
		\STATE $F^k[p].t'=F^k[p].t'+\textcolor{black}{\delta t_1}$
		\vspace{0.1cm}
		\STATE $\delta t_2 = \delta t_1$
		\vspace{0.1cm}
		\ENDWHILE
		\vspace{0.1cm}
		\IF{$F^k[p].t' > \Delta t$}
		\vspace{0.1cm}
		\STATE $F^{k+1}[p].t' = F^k[p].t' \text{ mod }\Delta t$
		\vspace{0.1cm}
		\ENDIF
		\vspace{0.1cm}
		\ENDFOR
		\vspace{0.1cm}
		\ENDFOR
	\end{algorithmic}
\end{algorithm}

Notice that in our one-shot method there is no need to solve
the kinetic model. However, the solution of this model is required to
simulate the evolution of the density with the computed control and
for evaluating the cost functional.

\subsection{MC simulation of the adjoint kinetic model}

We illustrate a reformulation of our augmented adjoint model \eqref{eq:AdjointAugmented}
that makes it amenable to be solved with a Monte Carlo method.
This aim requires to interpret the adjoint problem as a kinetic model
for `adjoint' particles, and the adjoint variable $q$ as their density.

The procedure that follows is tailored to the case of a KS collision term.
We define
\begin{align*}
	C^*_0 :=  \SPI{d} \big( A(w,v)-A(v,w)\big)\,dw = \Gamma \, \frac{1- \gamma}{\gamma}.
\end{align*}
Further, we introduce the following `adjoint' collision term
\begin{align*}
	C^*[q](x,v,t) = \SPI{d} A^*(w,v) \, q(x,w,t)\,dw - q(x,v,t) \, \SPI{d} A^*(v,w)\,dw,
\end{align*}
where $A^*(w,v)  = \frac{1}{\gamma} \, A(v,w)$, which results in an `adjoint'
collision frequency $1/\tau_q$ where $\tau_q = \gamma \, \tau$. Correspondingly, formula \eqref{eq:delta_t} applies to compute
the free streaming time of the adjoint particles.

With this setting, we can write our adjoint equation as follows:
\begin{align}
	-\partial_t q(x,v,t)+ L^*_uq(x,v,t) = C^*[q](x,v,t) + C^*_0 \, q(x,v,t)
	-\theta(x,v,t) {- \frac{1}{2\nu}| \nabla_v q(x,v,t)|^2}.
	\label{eq:Adjoint_feedback_long}
\end{align}

Now, we focus on the collision term in this equation and notice that
the transition probability is given by
\begin{align*}
	A^*(v,w)
	=\Gamma \mathcal{N}_1\left(\frac{v}{\gamma},\frac{1}{2\beta\gamma^2}  \right).
\end{align*}
With this knowledge, we can implement collisions using the Box-Muller
formula as discussed above.

Notice that, in the case of the adjoint kinetic model, we have to
deal also with particles having large velocity. In fact,
because of the structure of the adjoint collision term and the high variances of
$\theta$ and $\varphi$, the occurrence of passing a numerical bound
by the post-collision velocity of an adjoint particle is not a rare event.
In this case, the adjoint particle is removed from the list.

In addition to the simulation procedure for the gain-loss structure in the
kinetic model, we need to implement
the linear reaction term $C_0^*q$ appearing in the adjoint equation.
For this purpose, a splitting strategy leads
to consider the following differential equation
\begin{align*}
	-\partial_t q(x,v,t) = C_0^*\,q(x,v,t) .
\end{align*}
In terms of a small timestep ${\Delta t} >0$ depending on the collision frequency, this equation can be approximated by
\textcolor{black}{Euler's method as}
\begin{align*}
	q(x,v,t-{\Delta t}) = q(x,v,t) +{\Delta t}\, C_0^*\, q(x,v,t) .
\end{align*}
Therefore we can implement the
contribution of the linear reaction term as an increase in the
backward evolution of the distribution of
the adjoint particles at every point according to the
factor $(1+{\Delta t}\,C_0^*)$ for each timestep.
This means that one particle with velocity $v$
at time $t$ is replaced by $(1+{\Delta t}\,C_0^*)$ particles with the same velocity at the time $t - {\Delta t}$.

The MC procedure to implement the reaction term is presented in \cref{algo:linReact_creation}.
Analogously to $F$, we denote with $Q$ the list of labelled pointers
to structures representing adjoint particles.
\begin{algorithm}
	\caption{Implementation of the linear reaction term}
	\label{algo:linReact_creation}
	\begin{algorithmic}[1]
		\REQUIRE $Q^k$, $N^k_{q}$
		\vspace{0.1cm}
		\STATE Set $N \in \NN_0,$  $\varepsilon \in [0,1)$ such that
		$\textcolor{black}{\Delta t} \, C_0^* = N + \varepsilon$
		\vspace{0.1cm}
		\FOR{$p=1$ \TO $N^k_{q}$}
		\STATE Generate $N$ particles with the velocity and position
		$(Q^k[p].x,Q^k[p].v)$
		\vspace{0.1cm}
		\STATE Generate uniform random number $r \in [0,1]$
		\vspace{0.1cm}
		\IF{$r > 1-\varepsilon$}
		\vspace{0.1cm}
		\STATE Generate a particle with the velocity and position  $(Q^k[p].x,Q^k[p].v)$
		\vspace{0.1cm}
		\ENDIF
		\vspace{0.1cm}
		\ENDFOR
		\vspace{0.1cm}
		\STATE Add generated particles to the existing ones in $Q^k$
	\end{algorithmic}
\end{algorithm}

Next, we discuss our implementation of the source term
$-\left(\theta + \frac{1}{2\nu} | \nabla_v q|^2 \right)$.
In our MC algorithm, the source term corresponds to adding or
subtracting a number of particles according to the value given by
$-\left(\theta + \frac{1}{2\nu}|\nabla_v q|^2\right)$. However, for the evaluation
of this term, we need a computational mesh in phase space in order
to compute $\theta$ and $\nabla_v q$.

For this purpose, we consider a bounded domain of velocities
$\Upsilon \coloneqq [-v_{\max},v_{\max}] \subset \RR$,
where $v_{\max}>0$ is a working parameter that represents a
maximum value for each component of the velocities of the particles.
This parameter is assumed to be taken large enough, such that the exceeding of $[-v_{\max},v_{\max}]$ by the velocity of particles is a rare event.

Now, we define a partition of $\Upsilon$ in equally-spaced,
non-overlapping square cells with side
$\Delta v = {2\textcolor{black}{v_{\max}}}/{N_v}$ where $N_v \geq 2$. On this partition,
we consider a cell-centred representation of the velocities as follows:
\begin{align*}
	\Upsilon_{\Delta v}= \left\lbrace  v_j \in \Upsilon, \quad j = 1,\ldots,N_v \right\rbrace, \qquad v_j =  \left(j-\frac{1}{2}\right)\Delta v - v_{\max}
\end{align*}
and analogously we define a grid on the spatial domain
\begin{align*}
	\Omega_{\Delta x} = \left\lbrace  x_i \in \Omega, \quad i = 1,\ldots,N_x  \right\rbrace, \qquad x_i = \left(i-\frac{1}{2}\right)\Delta x .
\end{align*}
Hence, we have the discretized phase space $\Omega_{\Delta x} \times \Upsilon_{\Delta v}$.

On the given mesh, the computation of $\theta$ is straightforward. On
the other hand, the calculation of $\nabla_v q$ requires additional work.
In fact, the function $q$ on the grid needs to be assembled using
the list of particles $Q^k$.
For this reason, we construct the occupation number
$q_{ij}^k$, corresponding to
a cell centred in $(x_i,v_j)$ in the phase space, by
counting the adjoint particles in this cell at timestep $k$. Thus, we have
\begin{align}
	q_{ij}^k = \sum_{p=1}^{N_q^k}  \textcolor{black}{\mathbbm{1}}_{ij}\left( Q^k[p].x, Q^k[p].v \right) ,
	\label{eq:assembling_Distribution_q}
\end{align}
where ${\mathbbm{1}}_{ij}(\cdot,\cdot)$ denotes the indicator function, i.e. ${\mathbbm{1}}_{ij}(x,v) = 1$ if and only if $(x,v)$ is located
in the cell of $\Omega_{\Delta x}\times\Upsilon_{\Delta v}$
centred at $(x_i,v_j)$, and zero otherwise.

Clearly, a similar procedure has to be performed in order to reconstruct
the density $f$ on the grid as follows:
\begin{align}
	f_{ij}^k \coloneqq \sum_{p=1}^{N_f}  \textcolor{black}{\mathbbm{1}}_{ij}
	\left(
	F^k[p].x,F^k[p].v \right) .
	\label{eq:assemblin
		g_Distribution_f}
\end{align}

The evaluation of the derivative $\nabla_v q$ is based
on the reconstructed $q_{ij}^k$ given
by \eqref{eq:assembling_Distribution_q}. However,
before finite-difference differentiation is applied, we perform a
denoising step on $q_{ij}^k$.

Our denoising procedure can be put in the framework of Tikhonov
regularization techniques \cite{LakshmiParvathy12},
in the sense that the resulting smooth $\tilde q$ is defined
as the solution to the following minimization problem
\begin{align}
	\label{eq:DenoisingProblem}
	\min \Big(
	\frac{c_s}{2}\int_\Upsilon|\nabla_v \tilde q|^2\,dv \,+\,
	\frac{1}{2} \int_\Upsilon\left(\tilde q -q \right)^2\,dv  \Big) ,
\end{align}
where $q$ is the original noisy adjoint variable given by \eqref{eq:assembling_Distribution_q},
and $c_s>0$ is a regularization parameter that is usually small compared to the maximum value of $q$.

The Euler-Lagrange equation corresponding
to the optimization problem \eqref{eq:DenoisingProblem} is given by
\begin{align}
	\label{eq:ELequationDenoising}
	-c_s \, \Delta_v \tilde{q} + (\tilde{q}- q) = 0,
\end{align}
where $\Delta_v = \partial^2_{vv}$ is approximated by finite differences,
and we impose homogeneous Neumann boundary conditions, which
guarantees that the total (adjoint) mass is conserved:
\begin{align*}
	\int_\Upsilon \tilde{q} \, dv \,=\,
	\int_\Upsilon q \,dv.
\end{align*}
This fact is proved by applying the divergence theorem.

Now, we can evaluate $\nabla_v \tilde q$ as follows:
\begin{equation}
	\nabla_v \tilde q =   \frac{\tilde{q}^k_{i,j+1}-\tilde{q}_{i,j-1}^k}{2 \Delta v}
	\qquad i \in [N_x],\, j \in [N_v-1], \, k \in [N_t],
	\label{eGU}
\end{equation}
where $[N]$ denotes the set $\{1,\ldots,N\}$. For $j=N_v$, we use the backward finite-difference approximation of $\nabla_v q$.

With this preparation, we can present the MC implementation of the source term
in \cref{algo:DSMC_creation}.
\begin{algorithm}
	\caption{Implementation of the source term}
	\label{algo:DSMC_creation}
	\begin{algorithmic}[1]
		\REQUIRE $Q^k$, $\theta$
		\STATE {Assemble the distribution function $q^k$ from $Q^k$ using
			\eqref{eq:assembling_Distribution_q}; \\
			\vspace{0.1cm}
			compute the corresponding
			smoothed adjoint variable $\tilde{q}^k$ by solving \eqref{eq:ELequationDenoising}.}
		\vspace{0.1cm}	
		\FOR{{$i = 1$ \TO $N_x-1$}}
		\vspace{0.1cm}
		\FOR{{$j = 1$ \TO $N_v-1$}}
		\vspace{0.1cm}
		\STATE Calculate the value $N_{\theta} = -\theta(x_i,v_j,t^k)$ in the cell $(x_i,v_j)$ according to \eqref{eq:Structure_theta}.
		\vspace{0.1cm}
		\STATE {Set {$N_q = \frac{1}{2\nu} |(\tilde{q}^k_{i,j+1}-\tilde{q}^k_{i,j-1})/(2 \Delta v)|^2 $}}.
		\vspace{0.1cm}
		\STATE Generate $N_{new} = \max(\left\lfloor N_{\theta} - {N_q} \right\rfloor,0)$ new particles with velocity and position components having the uniform distribution within the phase space cell centred in $(x_i,v_j)$: $ (x,v) \sim \mathcal{U}_{\Delta x, \Delta v}\left( x_i,v_j \right) $
		\ENDFOR
		\vspace{0.1cm}
		\ENDFOR
		\STATE Add these particles to the existing ones in $Q^k$
	\end{algorithmic}
\end{algorithm}

Notice that by the definition of $N_{new}$ particles are only created but never removed from the list. This is of course an approximation, but it is a sufficient good one in order to calculate a effective control, as it is evident by the experiments.

Since the implementation of reaction and source terms leads
to varying the number of adjoint particles in time,
we index this number with $k$ and write $N_q^k$.

In order to complete the description of our algorithm for
the augmented adjoint problem, we remark that the terminal condition
$q=-\varphi(x,v)$ must be implemented. This procedure corresponds
to initializing $Q^{N_t}$ using the distribution $-\varphi$ in phase space (cf. \cref{algo:initialize}).

The MC simulation algorithm for the adjoint problem is given in \cref{algo:DSMC_adj}.

\begin{algorithm}
	\caption{MC solver of the augmented adjoint kinetic model}
	\label{algo:DSMC_adj}
	\begin{algorithmic}[1]
		\REQUIRE $\theta(x,v,t)$, $\varphi(x,v)$, $u(x,v,t)$.
		\vspace{0.1cm}
		\STATE Initialize $Q^{N_t}$ with $N_{q}^{N_t}$ particles using \cref{algo:initialize} and $-\varphi$, set $\delta t_2 = 0$
		\vspace{0.1cm}
		\FOR {$k=N_t$ \TO $1$}
		\vspace{0.1cm}
		\STATE {Use $Q^k$ and \cref{algo:DSMC_creation} to implement the source term}
		\vspace{0.1cm}
		\STATE Use $Q^k$ and \cref{algo:linReact_creation} to implement the linear reaction term
		\vspace{0.1cm}
		\FOR{$p=1$ \TO $N_{q}^k$}
		\vspace{0.1cm}
		\WHILE{$Q^k[p].t'<\Delta t$}
		\vspace{0.1cm}
		\STATE Generate $\delta t_1 $ according to \eqref{eq:delta_t} using $\tau_q$ instead of $\tau$
		\vspace{0.1cm}
		\STATE Determine $v\sim \mathcal{N}\left(\frac{v}{\gamma},\frac{1}{2\beta\gamma^2}\right)$
		\vspace{0.1cm}
		\STATE update $Q^k[p].x$ and $Q^k[p].v$ according the adjoint Verlet-Algorithm:
		\vspace{0.1cm}
		\STATE $Q^k[p].x \, = \, Q^k[p].x + Q^k[p].v\,\delta t_1 \, - \, u(Q^k[p].x) \frac{\delta t_1+\delta t_2}{2}\delta t_1$,
		\vspace{0.1cm}
		\STATE $ Q^k[p].v \, = \,  Q^k[p].v - u( Q^k[p].x)\,\delta t_1$,
		and taking the adjoint boundary condition into account using \cref{algo:bdry}
		\vspace{0.1cm}
		\STATE $Q^k[p].t'=Q^k[p].t'+\delta t$
		\vspace{0.1cm}
		\STATE $\delta t_2 = \delta t_1$
		\vspace{0.1cm}
		\ENDWHILE
		\vspace{0.1cm}
		\IF{$Q^k[p].t' > \Delta t$}
		\vspace{0.1cm}
		\STATE $Q^{k-1}[p].t' = Q^k[p].t' \text{ mod }\Delta t$
		\vspace{0.1cm}
		\ENDIF
		\vspace{0.1cm}
		\ENDFOR
		\vspace{0.1cm}
		\ENDFOR
	\end{algorithmic}
\end{algorithm}

\begin{remark}
	Notice that we do not need to formulate any iterative gradient-based method since we only need to solve \eqref{eq:AdjointAugmented} once in order to establish the control given by \eqref{eq:ReducedGradient}.
	The numerical procedure to solve \eqref{eq:AdjointAugmented} is summarized in \cref{algo:DSMC_adj}.
\end{remark}

\section{Numerical experiments}
\label{sec:NumericalExamples}

In this section, we present results of numerical experiments
demonstrating the effectiveness of our framework to design a
control field capable of driving an initial density of
particles randomly distributed in the phase space to reach and
maintain a desired cyclic trajectory.

We consider a two dimensional phase space domain $\Omega = [0,p_{\max}] \times [-v_{\max},v_{\max}]$ with positive $p_{\max}$, $v_{\max}$.
On this domain, we set $f_0$ equal to a uniform distribution.

The desired orbit corresponds to a harmonic oscillator of unit mass
and force corresponding to Hooke's law as follows
\begin{align*}
	F_0(x,v) = - \omega^2\left(x - \frac{p_{\max}}{2}\right) .
\end{align*}
The resulting trajectory is given by
\begin{align*}
	z_D(t) =  \binom{x_D(t)}{v_D(t)} =
	\binom{ 2.5\cos(\omega t )+x_0}
	{-2.5\omega\sin(\omega t)-v_0},
	\qquad
	\omega = \frac{2\pi}{T},
	\qquad
	\binom{x_0}{v_0} = \binom{p_{\max}/2}{0},
\end{align*}
where $T$ is the period of the orbit.

In order to determine the control field everywhere in the computational
phase space domain, we consider an initial density that is uniformly
distributed. Therefore we take $\bar{\theta}$ given by \eqref{eThetaStat}.
We initialize the iterative solver with $u^0\equiv0$, and the values of the
physical and numerical parameters are specified in \cref{tab:ParamsEx2}.
We present results of different experiments, where we choose $c_s = 0.5$.
This value is based on our numerical experience showing that the accuracy
of the control function computed by our MC scheme is sensitive to the value of $c_s$
when using a relatively small number of adjoint particles $N_q$ (as we do).

\begin{table}
		\centering
		\begin{tabular}{|c|c||c|c|}
			\hline
			\textbf{Symbol} & \textbf{Value} & \textbf{Symbol} & \textbf{Value} \\[0.1cm]
			\hline
			$N_t$ & {100} & $\Delta t$  & $0.025$  \\[0.1cm]
			\hline
			$N_x \times N_v$ [-]  & $50 \times 50$ & $v_{\max}$ & $5$ \\[0.1cm]
			\hline
			$p_{\max}$  &  $10.0$   & $\Delta v$ & $0.2$   \\[0.1cm]
			\hline
			$\Delta p$  &  $0.2$     & $N_f$  &  $10^4$  \\[0.1cm]
			\hline
		\end{tabular}
			\bigskip
		\begin{tabular}{|c|c||c|c|}
			\hline
			\textbf{Symbol} & \textbf{Value} & \textbf{Symbol} & \textbf{Value} \\
		\hline
		$\gamma$   &  $0.9999$ & $\alpha$  & $0.5$   \\[0.1cm]
		\hline
		$\nu$   & $1$  &$C_\theta$, $C_\varphi$   & $10^3$   \\[0.1cm]
		\hline
		$c_s$ & $0.5$  & $N_q^{N_t}$  & $6 \cdot 10^2$   \\[0.1cm]
		\hline
	\end{tabular}
\caption{ Numerical and physical parameters. }
\label{tab:ParamsEx2}
\end{table}

With this setting, we obtain the control field $u=u(x,v,t)$ that is depicted in
\cref{fig:Control_quiver} over several timesteps from $t=0$ to $t=T$,
where $T=2.5$.
In \cref{fig:control_uniform_case}, one can see the evolution of the density $f$ subject to the action of the computed control; the plots in the
these two figures refer to the same timesteps.

\begin{figure}[h]
\begin{subfigure}[l]{\scaleQuiverPlots\textwidth}
	\center
	\includegraphics[width=\textwidth]{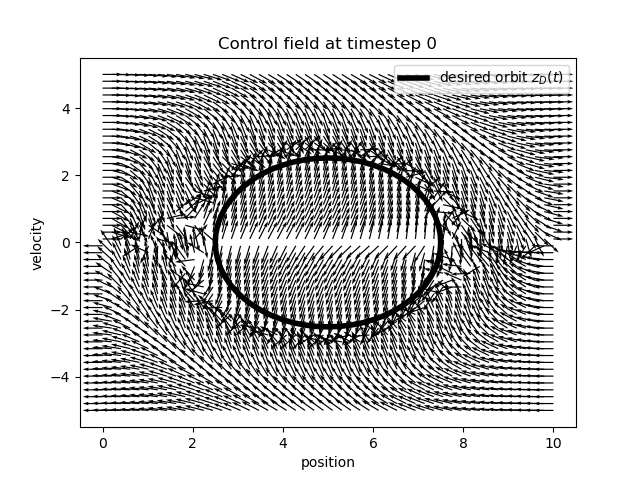}
\end{subfigure}
\begin{subfigure}[l]{\scaleQuiverPlots\textwidth}
	\center
	\includegraphics[width=\textwidth]{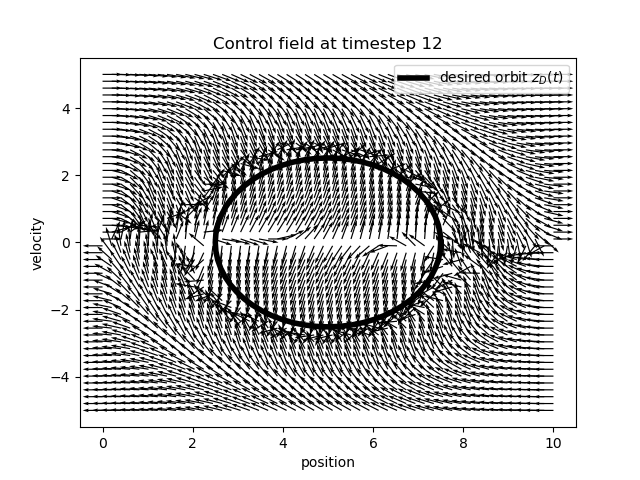}
\end{subfigure}
\begin{subfigure}[r]{\scaleQuiverPlots\textwidth}
	\center
	\includegraphics[width=\textwidth]{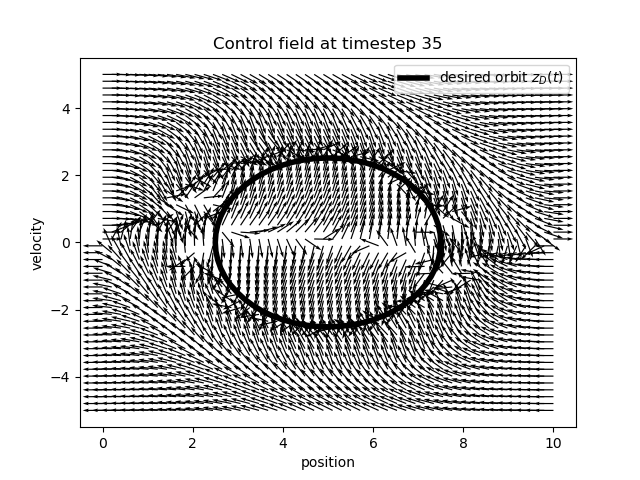}
\end{subfigure}
\begin{subfigure}[l]{\scaleQuiverPlots\textwidth}
	\center
	\includegraphics[width=\textwidth]{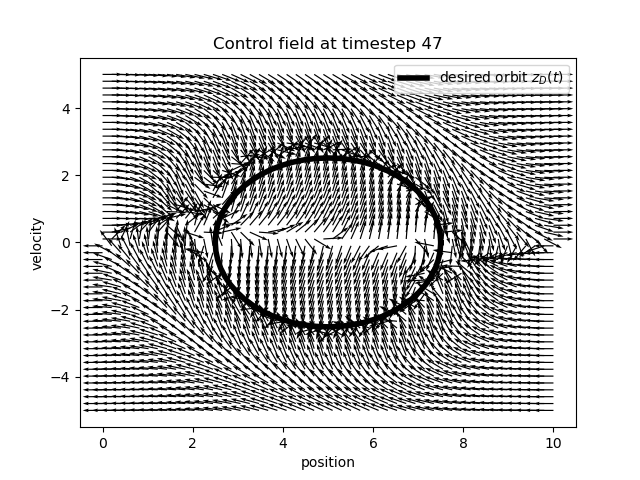}
\end{subfigure}
\begin{subfigure}[l]{\scaleQuiverPlots\textwidth}
	\center
	\includegraphics[width=\textwidth]{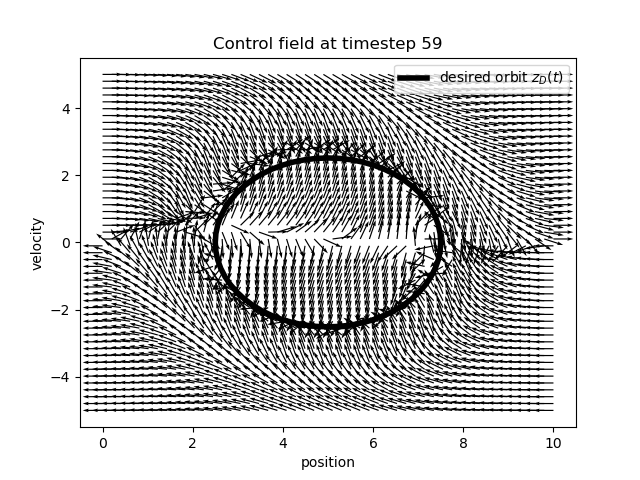}
\end{subfigure}
\begin{subfigure}[r]{\scaleQuiverPlots\textwidth}
	\center
	\includegraphics[width=\textwidth]{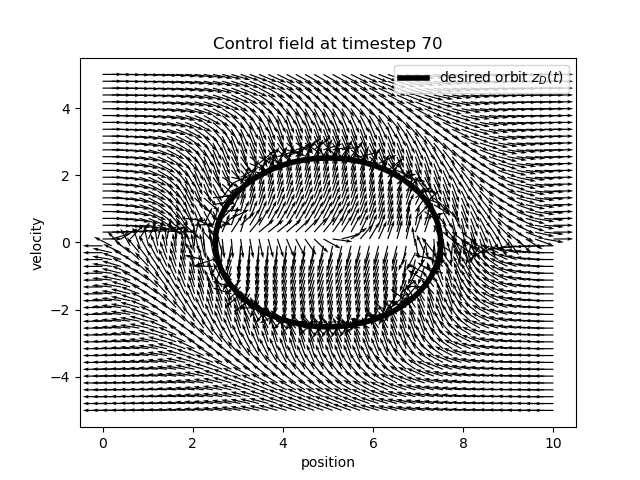}
\end{subfigure}
\begin{subfigure}[l]{\scaleQuiverPlots\textwidth}
	\center
	\includegraphics[width=\textwidth]{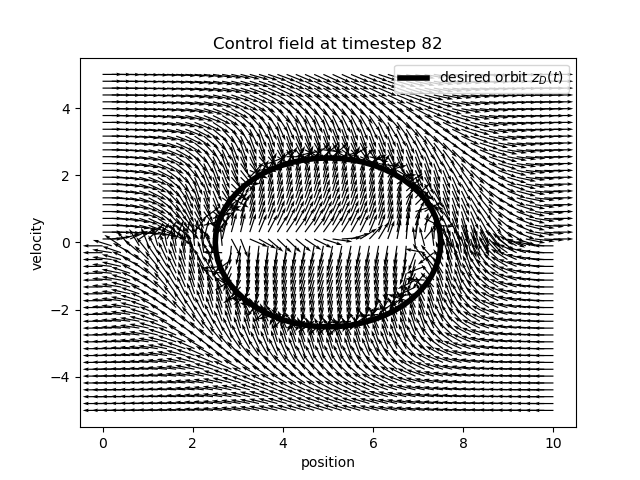}
\end{subfigure}
\hfill
\begin{subfigure}[l]{\scaleQuiverPlots\textwidth}
	\center
	\includegraphics[width=\textwidth]{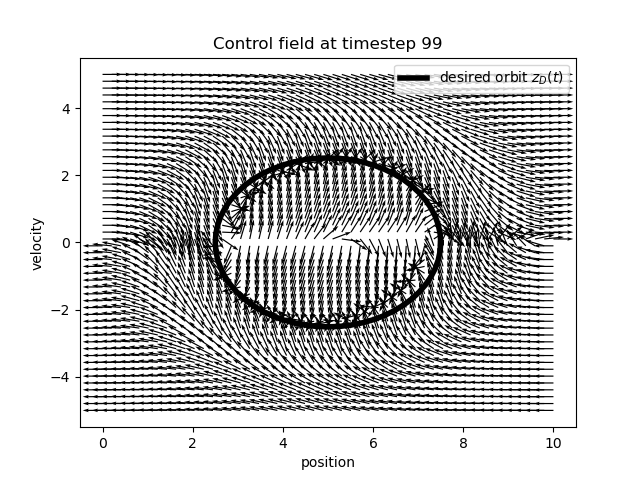}
\end{subfigure}
\vspace{0.2cm}
\caption{Quiver plot of the calculated control. The solid ellipse is the curve $z_D(t)$, $t\in[0,T]$.
	The arrows are given by the scaled vector $(v,u(x,v,t))^T$.}
\label{fig:Control_quiver}
\end{figure}

\clearpage

\begin{figure}[h]
	\includegraphics[width=\scaleParticlePlots\textwidth]{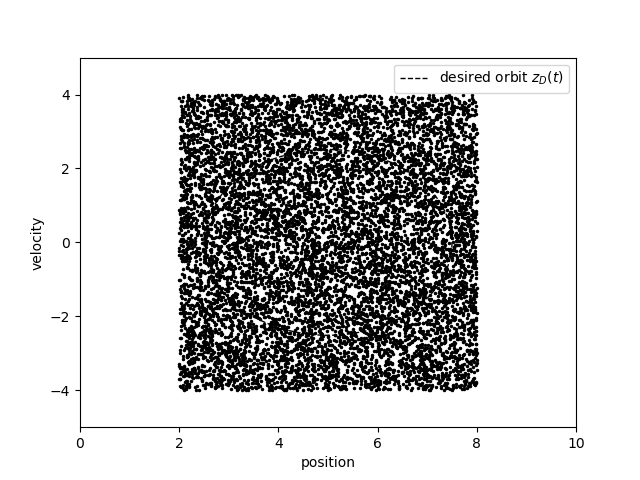}
	\includegraphics[width=\scaleParticlePlots\textwidth]{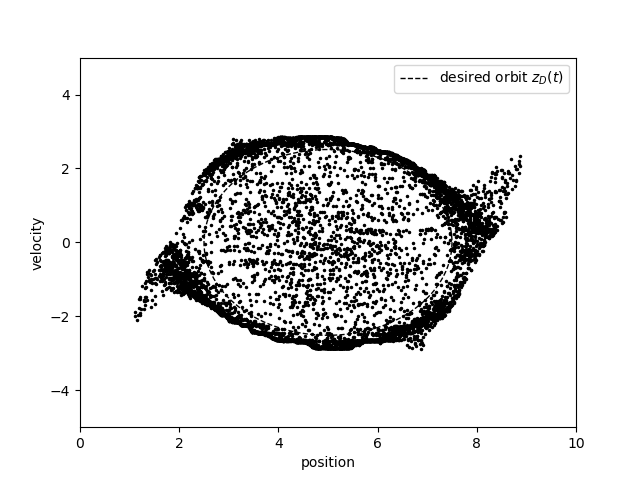}
	\includegraphics[width=\scaleParticlePlots\textwidth]{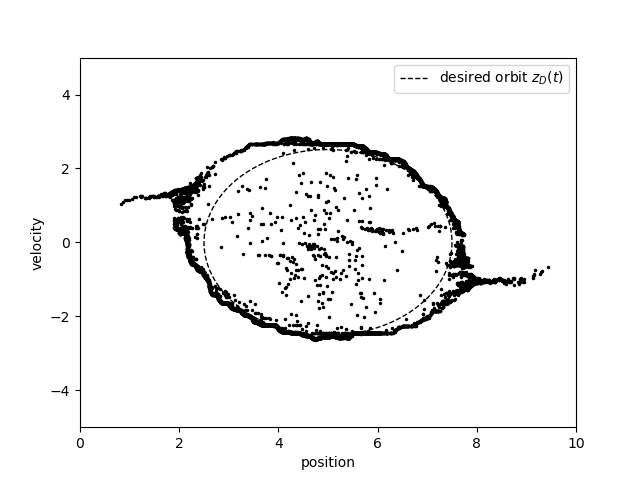}
	\includegraphics[width=\scaleParticlePlots\textwidth]{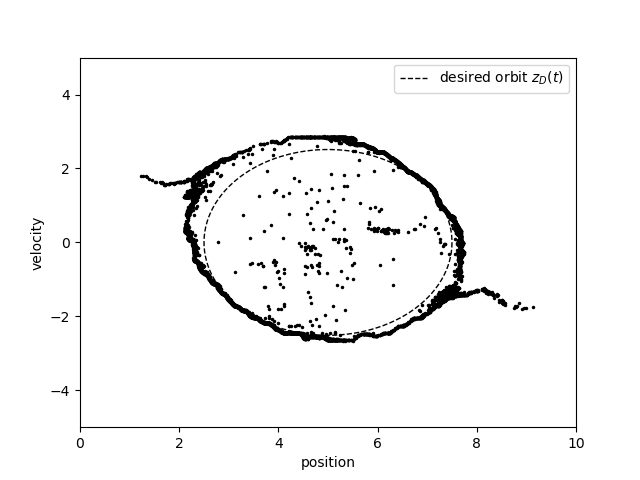}
	\includegraphics[width=\scaleParticlePlots\textwidth]{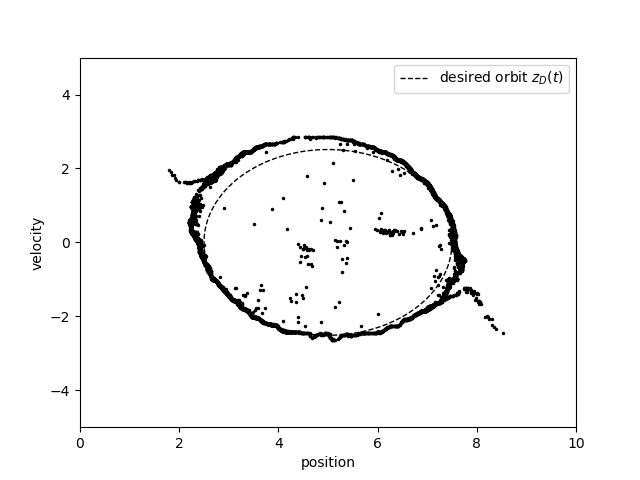}
	\includegraphics[width=\scaleParticlePlots\textwidth]{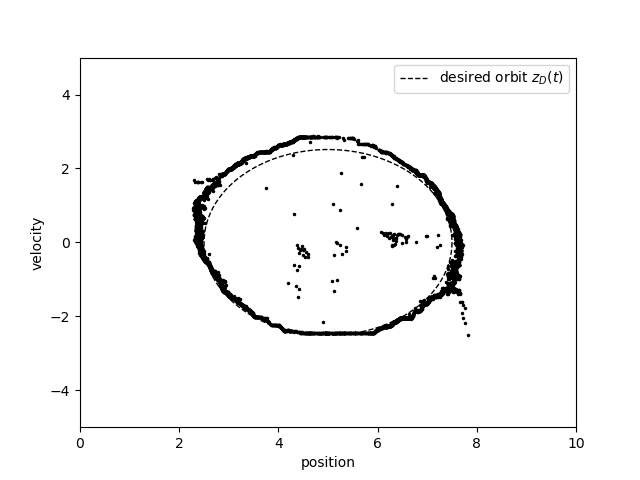}
	\includegraphics[width=\scaleParticlePlots\textwidth]{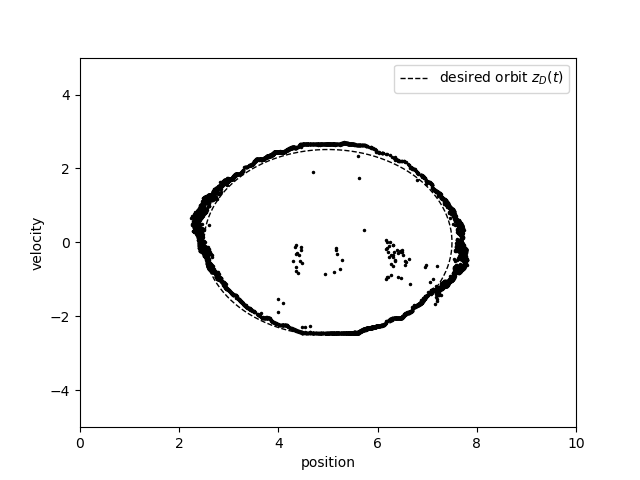}
	\hfill
	\includegraphics[width=\scaleParticlePlots\textwidth]{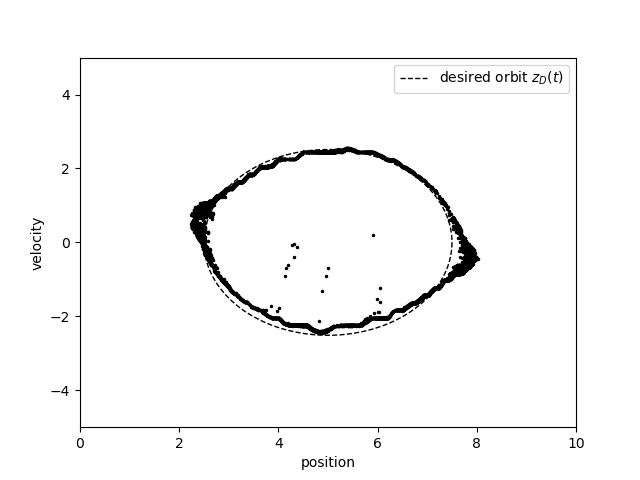}
\caption{Evolution of $f$ starting from a uniform initial distribution and subject to the control field $u$.}
\label{fig:control_uniform_case}
\end{figure}

\clearpage

Next, we present results of experiments to investigate the sensitivity of our computational procedure with respect to the denoising parameter $c_s$ and the weight of the control $\nu$.
In \cref{fig:DifferentDenoising}, we plot the resulting control and the corresponding particle distribution at final time for different values of the denoising parameter $c_s$.
We see that the resulting control field is sensitive to the value of this parameter. However, this sensitivity weakens by choosing a larger number of
particles in the MC implementation of the adjoint model. In fact, a larger number of particles results in a more regular distribution and hence less need of denoising.
In the current case, the choice $c_s=0.5$ appears optimal.

In \cref{fig:DifferentControlWeights}, we plot the control at the final time for different values of the control weight $\nu$ and fixed $c_s=0.5$.
These results show that the resulting control field is less sensitive to the choice of the value of $\nu$.
Notice that for higher values of the control weight we have a weaker control field at $v=0$.

\begin{figure}[h]
	\centering
\begin{subfigure}[l]{\textwidth}
	\includegraphics[width=\scaleQuiverPlotsSmall\textwidth]{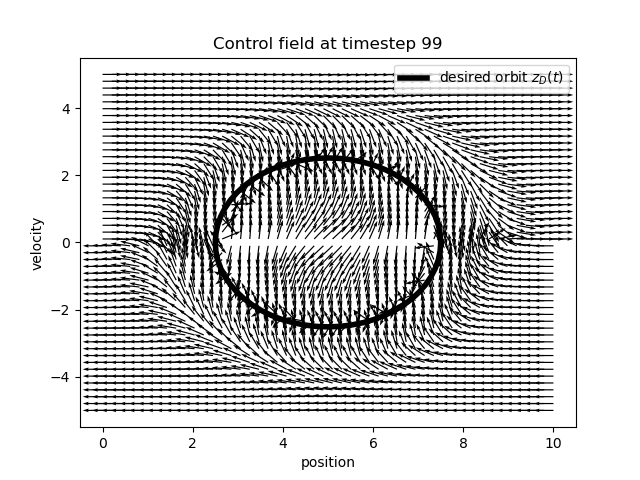}
	\hfill
	\includegraphics[width=\scaleQuiverPlotsSmall\textwidth]{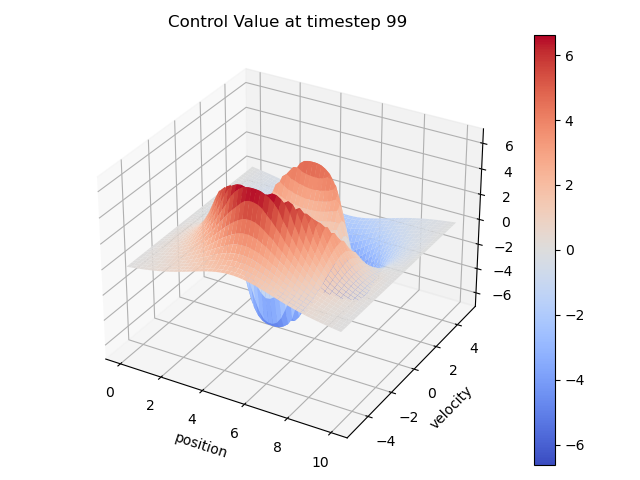}
	\hfill
	\includegraphics[width=\scaleQuiverPlotsSmall\textwidth]{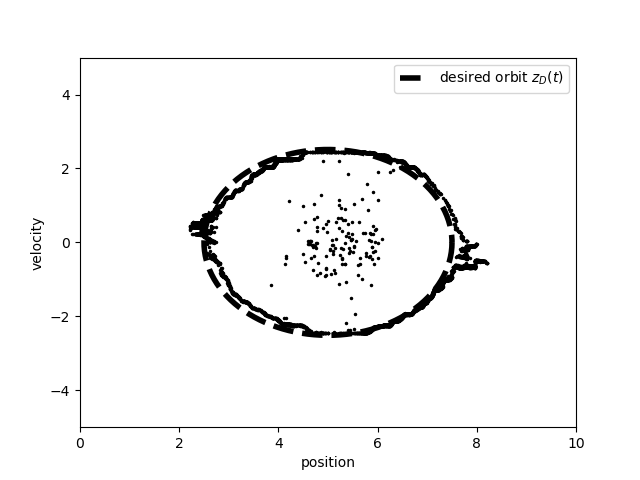}
	\caption{$c_s = 0.1$}
\end{subfigure}
\begin{subfigure}[l]{\textwidth}
	\includegraphics[width=\scaleQuiverPlotsSmall\textwidth]{fig/uniform_initial/direct_calculation/quivers_Control_100/scaled_quiver_Control_it0_99.png}
	\hfill
	\includegraphics[width=\scaleQuiverPlotsSmall\textwidth]{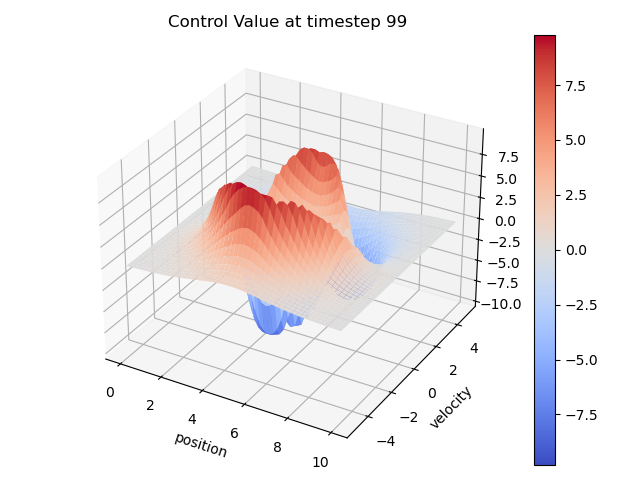}
	\hfill
	\includegraphics[width=\scaleQuiverPlotsSmall\textwidth]{fig/uniform_initial/direct_calculation/animation_state/particleTrajectories_99.png}
	\caption{$c_s = 0.5$}
\end{subfigure}
\begin{subfigure}[l]{\textwidth}
	\includegraphics[width=\scaleQuiverPlotsSmall\textwidth]{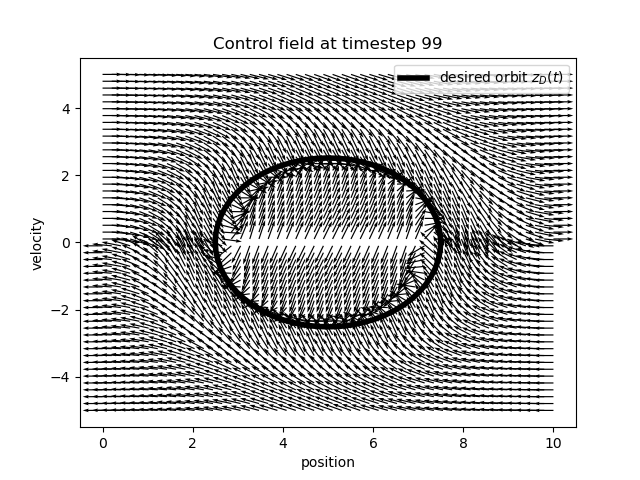}
	\hfill
	\includegraphics[width=\scaleQuiverPlotsSmall\textwidth]{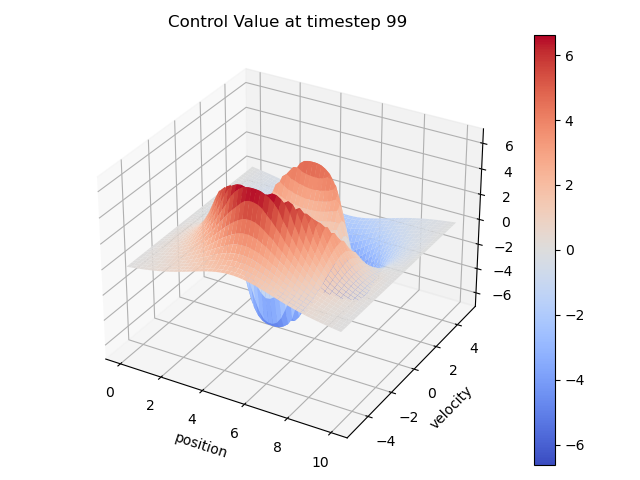}
	\hfill
	\includegraphics[width=\scaleQuiverPlotsSmall\textwidth]{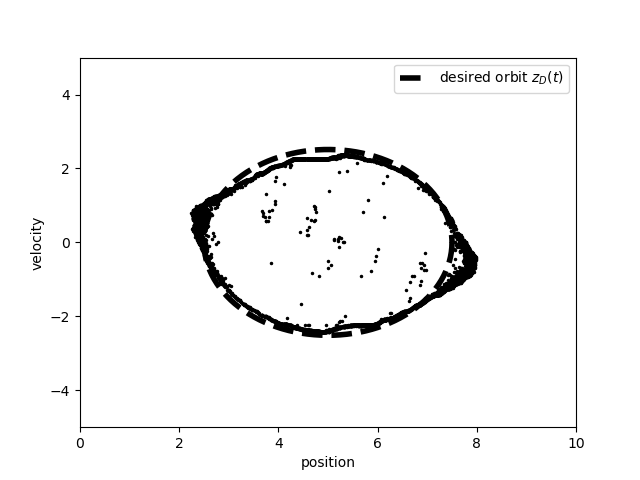}
	\caption{$c_s = 1$}
\end{subfigure}
\caption{Control field and corresponding distribution of particles at final time using different values of the denoising parameter $c_s$.}
\label{fig:DifferentDenoising}
\end{figure}

\clearpage

\begin{figure}[h]
\begin{center}
	\begin{subfigure}[l]{0.24\textwidth}
		\includegraphics[width=\textwidth]{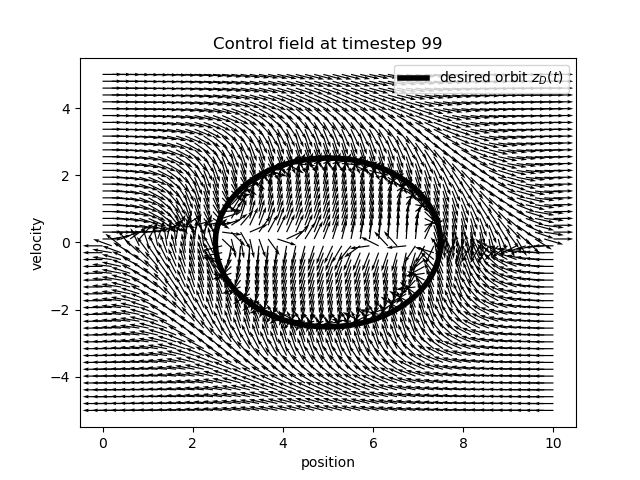}
		\caption{$\nu=10^5$}
	\end{subfigure}
	\begin{subfigure}[l]{.24\textwidth}
		\includegraphics[width=\textwidth]{fig/uniform_initial/direct_calculation/quivers_Control_100/scaled_quiver_Control_it0_99.png}
		\caption{$\nu=1$}
	\end{subfigure}
	\begin{subfigure}[l]{.24\textwidth}
		\includegraphics[width=\textwidth]{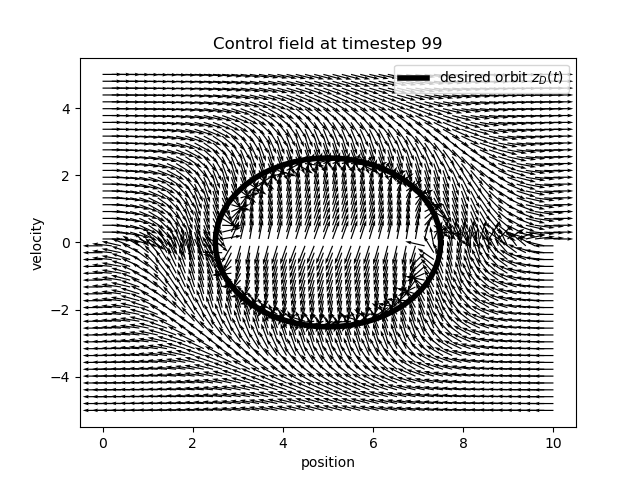}
		\caption{$\nu=10^{-2}$}
	\end{subfigure}
	\begin{subfigure}[l]{.24\textwidth}
		\includegraphics[width=\textwidth]{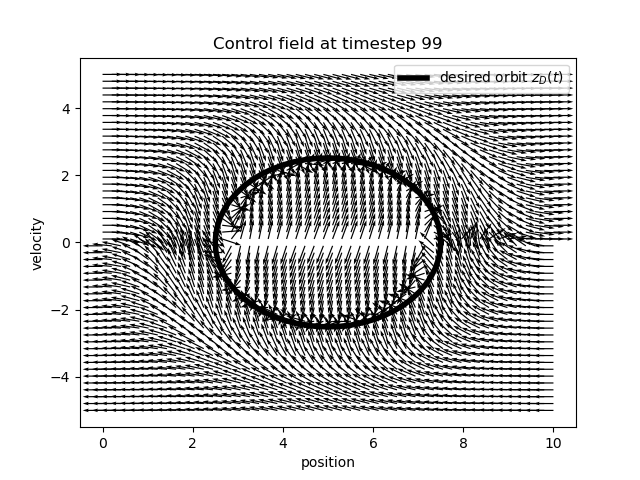}
		\caption{$\nu=10^{-5}$}
	\end{subfigure}
\end{center}
\caption{Control field at final time for different values of the control weight $\nu$. }
\label{fig:DifferentControlWeights}
\end{figure}


In the next experiment, we demonstrate that starting from a
different initial distribution and using the time-averaged control,
we obtain the required stabilizing effect. We consider the
following initial density of particles centred at a
point $z_0=(x_0,v_0)$ that does not belong to the desired trajectory:
\begin{align*}
f_0 = \mathcal{N}_2\left(z_0, \Sigma_0 \right),
\qquad z_0 = \binom{8.0}{3.5},
\qquad \Sigma_0 = \begin{pmatrix}
	\textcolor{black}{0.15}& 0 \\ 0 & \textcolor{black}{0.15}
\end{pmatrix} .
\end{align*}

Then, we simulate the evolution of these particles subject to the
time-averaged control $\bar{u}$ given by \eqref{eControlStat} with
$u$ computed in the previous experiment (see \cref{fig:Control_quiver}).
The stationary control field $\bar{u}$ is depicted in \cref{fig:TimeAveraged_Control}.

\begin{figure}[h]
\centering
\includegraphics[width=0.4\textwidth]{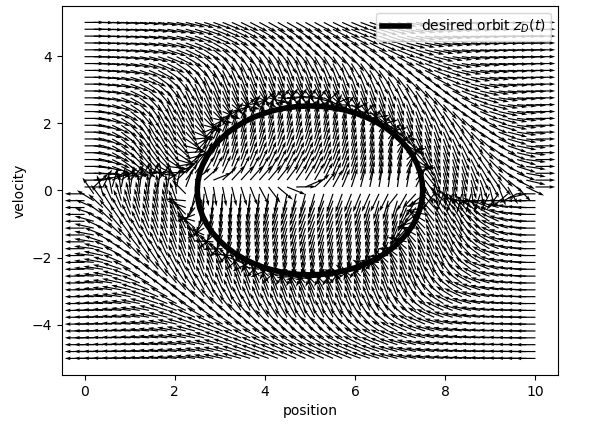}
\hfill
\includegraphics[width=0.4\textwidth]{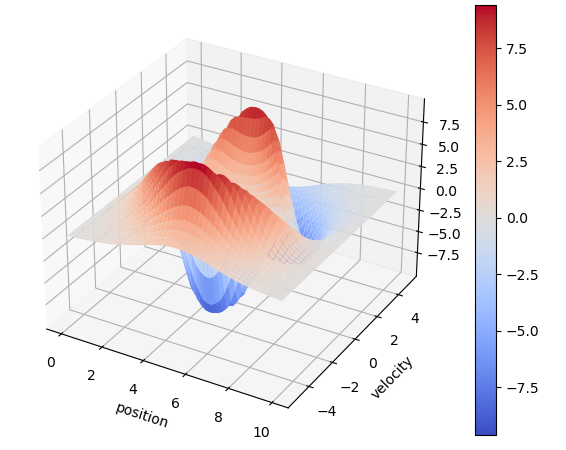}
\caption{Averaged control $\bar{u}$ defined in \eqref{eControlStat}; quiver and 3D plot.}
\label{fig:TimeAveraged_Control}
\end{figure}

In \cref{fig:control_test_cases}, we see that the initial density of particles
is far away from the desired orbit. Nevertheless, subject to
the stationary control field, we see that the particles are driven towards
this orbit.

\begin{figure}
\includegraphics[width=\scaleParticlePlots\textwidth]{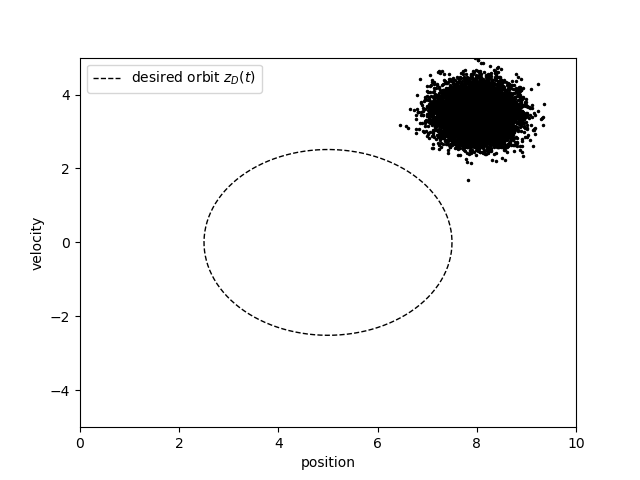}
\includegraphics[width=\scaleParticlePlots\textwidth]{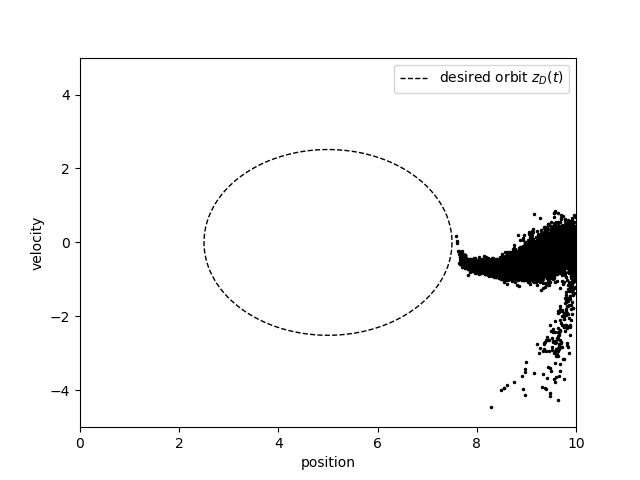}
\includegraphics[width=\scaleParticlePlots\textwidth]{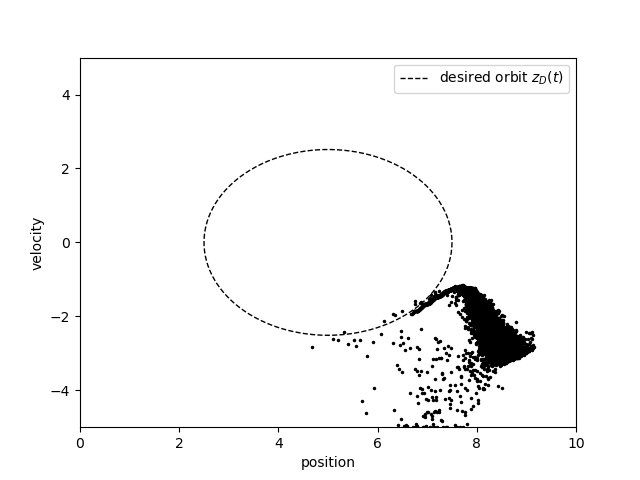}
\includegraphics[width=\scaleParticlePlots\textwidth]{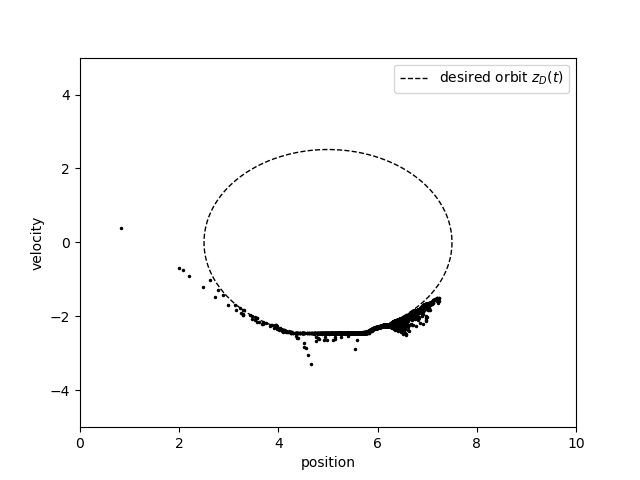}
\includegraphics[width=\scaleParticlePlots\textwidth]{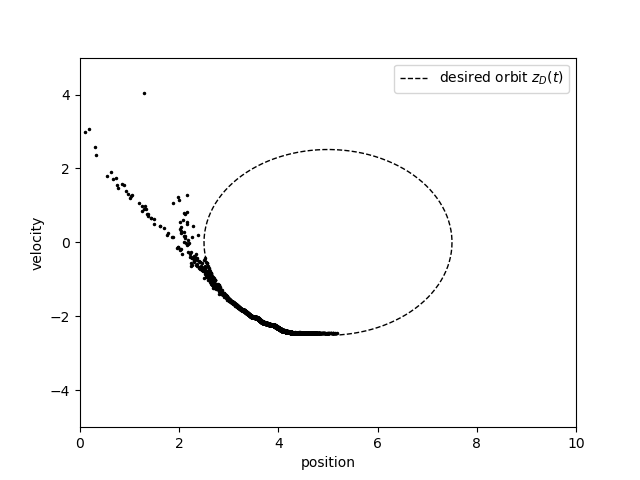}
\includegraphics[width=\scaleParticlePlots\textwidth]{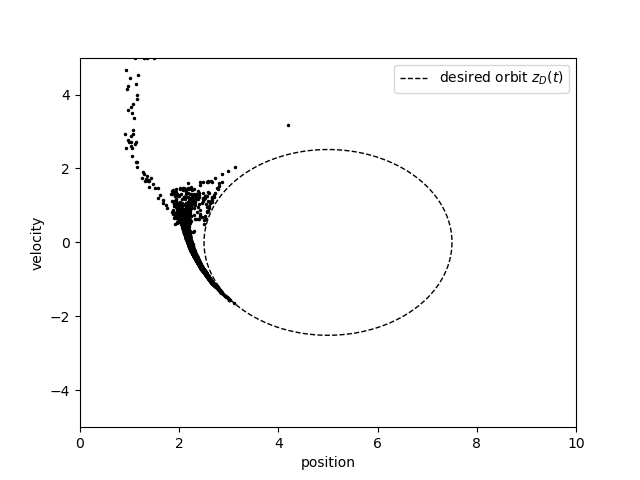}
\includegraphics[width=\scaleParticlePlots\textwidth]{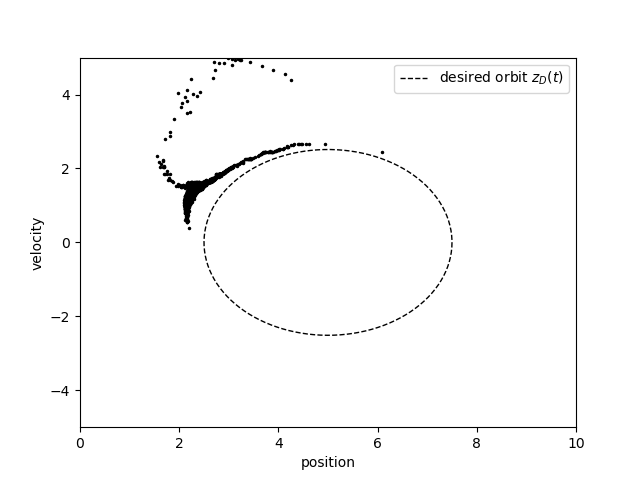}
\hfill
\includegraphics[width=\scaleParticlePlots\textwidth]{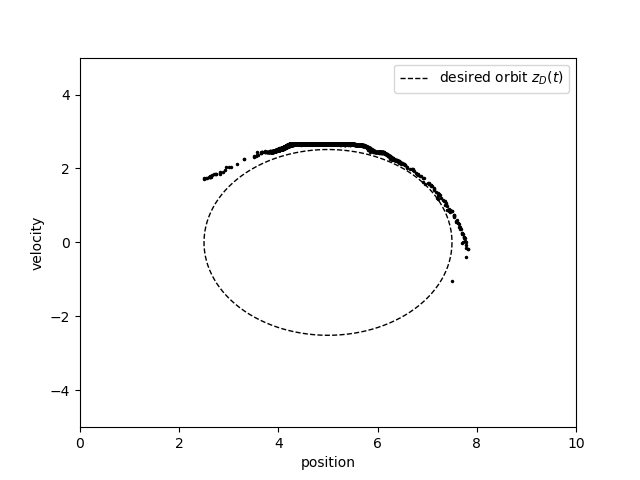}
\caption{{Evolution of $f$, starting with an initial Gaussian distribution
		and subject to the averaged control $\bar{u}$. Time ordering as in \cref{fig:Control_quiver}}.}
\label{fig:control_test_cases}
\end{figure}

\clearpage

\section{Conclusion}
\label{sec:Conclusion}

This work was devoted to the construction
of feedback-like control fields for a kinetic model in phase space. The purpose of these controls was to drive any initial density of
particles in the phase space to reach and maintain in a stable way a desired
orbit. For this purpose, a one-shot method was presented that
was based on the formulation of an ensemble optimal control problem governed by the governing kinetic model. The one-shot solution procedure
consisted of a Monte Carlo backward-in-time solve of a nonlinear
augmented adjoint kinetic model. Results of numerical
experiments were presented that demonstrated the effectiveness of the proposed feedback control framework.

\section*{Acknowledgments}
The author J.B. was partially financially supported by the SFB1432.
We would like to express our gratitude to the anonymous referees for their helpful questions and remarks.

\end{document}